\newcommand{\spo}{\text{\rm Supp}}
\newcommand{\Z}{\mathbb{Z}}
\newcommand{\F}{F\langle X|G\rangle}
\newcommand{\fx}{f(x_1,\ldots,x_n)}
\newcommand{\spa}{\text{\rm span}}
\newtheorem{thm}{theorem}[section]
\newtheorem{theorem}[thm]{Theorem}
\newtheorem{proposition}[thm]{Proposition}
\newtheorem{lemma}[thm]{Lemma}
\newtheorem{remark}[thm]{Remark}
\newtheorem{definition}[thm]{Definition}
\newtheorem{problem}[thm]{Problem}
\newtheorem{example}[thm]{Example}
\begin{document}

	\title[Graded monomial identities on matrices]{Graded monomial identities and almost non-degenerate gradings on matrices}
	
	\author[L. Centrone]{Lucio Centrone}\thanks{L. Centrone was supported by FAPESP (No. 2018/02108-7) and by CNPq grant No. 308800/2018-4}\address{Dipartimento di Matematica, Universit\`a degli Studi di Bari, via \linebreak Orabona 4, 70125, Bari, Italy}\email{lucio.centrone@uniba.it} \address{IMECC, Universidade Estadual de
		Campinas, Rua S\'ergio Buarque de Holanda, 651, Cidade Universit\'aria ``Zeferino Vaz'', Distr. Bar\~ao Geraldo, Campinas, S\~ao Paulo, Brazil, CEP
		13083-859}\email{centrone@unicamp.br}
	
	\author[D. Diniz]{Diogo Diniz}\thanks{D. Diniz was supported by CNPq grants No. 406401/2016-0 and No. 303822/2016-3 and by FAPESP grant 2018/15627-2}\address{Unidade Acad\^emica de Matem\'atica e Estat\'istica,		Universidade Fe\-deral de Campina Grande, Apr\'igio Veloso, 785, 58429970, Campina Grande, PB, Brazil}\email{diogo@mat.ufcg.edu.br}
	
	\author[T. C. de Mello]{Thiago Castilho de Mello}\thanks{T. C. de Mello was supported by CNPq grant No. 461820/2014-5 and FAPESP grants No. 2018/15627-2 and 2018/23690-6}\address{Instituto de Ci\^encia e Tecnologia, Universidade Federal de S\~ao Paulo, Av. Cesare M. Giulio Lattes, 1201, 12247014,	S\~ao Jos\'e dos Campos, SP, Brazil}\email{tcmello@unifesp.br}\keywords{graded polynomial identities, equivalence of gradings, non-degenerate gradings, matrix algebras}\subjclass[2010]{16R50, 16W50, 16R99, 16R10}
	
	\maketitle
	
	\begin{abstract}
		
	Let $F$ be a field of characteristic zero, $G$ be a group and $R$ be the algebra $M_n(F)$ with a $G$-grading. Bahturin and Drensky proved that if $R$ is an elementary and the neutral component is commutative then the graded identities of $R$ follow from three basic types of identities and monomial identities of length $\geq 2$ bounded by a function $f(n)$ of $n$. In this paper we prove the best upper bound is $f(n)=n$, more generally we prove that all the graded monomial identities of an elementary $G$-grading on $M_n(F)$ follow from those of degree at most $n$. We also study gradings which satisfy no monomial identities but the trivial ones, which we call almost non-degenerate gradings. The description of non-degenerate elementary gradings on matrix algebras is reduced to the description of non-degenerate elementary gradings on matrix algebras that have commutative neutral component. We provide necessary conditions so that the grading on $R$ is almost non-degenerate and we apply the results on monomial identities to describe all almost non-degenerate $\Z$-gradings on $M_n(F)$ for $n\leq 5$. 
		
	\end{abstract}

\section{Introduction}

A fundamental problem in the theory of algebras with polynomial identities is finding a set of generators (or basis), as a verbal ideal (or $T$-ideal), for the ideal of identities of any given algebra. For associative algebras over a field of characteristic zero Kemer (see \cite{K},\cite{K2}) proved that there always exists a finite set of generators but he did not give any algorithm to determine such a set. A finite basis for the ideal of identities is known for commutative algebras, whereas for matrix algebras over an infinite field $F$ it is known only for the algebra of $2\times 2$ matrices, see \cite{D}, \cite{R}. If $F$ is a field of positive characteristic $p>2$ we have a description of a finite basis of identities for $M_2(F)$ (see \cite{Ko2}). We note that some further partial results for $M_2(F)$ in the case of fields of characteristic 2 were obtained in \cite{D1} and \cite{Ko1} but it is still unknown if the ideal of identities of $M_2(F)$ is finitely generated or not in this case. Hence finding out explicit sets of generators of $T$-ideals is in general an interesting problem. In this frame some "weaker" identities arise such as the graded polynomial identities for graded algebras. This kind of identities started to be studied in the theory developed by Kemer and plays an important role in the study of ordinary polynomial identities. In particular the $T$-ideal of any associative algebra can be obtained manipulating (via the Grassmann envelope) the ideal of $\Z_2$-graded identities of a finite dimensional $\Z_2$-graded algebra. Moreover two algebras graded by the same group satisfying the same graded identities must satisfy the same ordinary identities. A fundamental result by Bahturin, Giambruno and Riley (see \cite{bgr1}) shows that if an algebra is graded by a finite group and the component associated to the neutral element of the group is a PI-algebra, then the whole algebra is also a PI-algebra. All of this justifies the interest in studying graded identities.

A grading on a matrix algebra is said to be elementary if the elementary matrices are homogeneous in the grading. Group gradings on matrix algebras are described in terms of elementary gradings and division gradings, i.e., gradings in which every non-zero homogeneous element is invertible. If the group is cyclic or torsion-free, then every grading on a matrix algebra is an elementary grading, see the paper \cite{BZ1} by Bahturin and Zaicev and the references therein. From now on every field is assumed to be of characteristic 0. A basis for the graded identities of $M_2(F)$ with the canonical $\Z_2$-grading, and related algebras, was determined by Di Vincenzo in \cite{DV}. Later Vasilovsky provided a basis for the graded identities of the algebra of $n\times n$ matrices with the canonical gradings by the groups $\Z_n$ and $\Z$, see \cite{V}, \cite{V2}. In \cite{BD} Bahturin and Drensky described a basis for the identities of the algebra $M_n(F)$ of $n\times n$ matrices with an elementary grading such that the neutral component is commutative. In particular, they proved that every graded identity follows from three types of identities and monomial identities of length (the usual degree of the monomial) bounded by a function $f(n)$ of $n$. In \cite[Theorem 3.1]{CM} a similar result is obtained for every subalgebra of $M_n(F)$ having a basis consisting of elementary matrices with the canonical (or Vasilovsky) $\mathbb Z_n$-grading. The authors proved that in the ideal of graded identities we only need monomial identities of degree up to $2n-1$. It was conjectured in \cite[Conjecture 5.1]{CM} that the best upper bound is $n$. In \cite{DM} the authors extend such result, and the corresponding conjecture, for arbitrary groups and for infinite fields of arbitrary characteristic. In this paper we prove the last conjecture for $M_n(F)$. More precisely we prove that the graded monomial identities of $M_n(F)$ with an elementary grading by a group $G$ follow from those of degree up to $n$, see Theorem \ref{monomialdegree} in the sequel.

The canonical $\mathbb{Z}$-grading on $M_n({F})$ has an interesting property: the monomial identities are consequence of the most \textit{trivial} ones, i.e., the indeterminates with homogeneous degrees that lie in the complement of the support of the grading.
A natural question is to consider the $G$-gradings on the matrix algebra $M_n(F)$ satisfying the same property, i.e., that each of its monomial identities follow from the trivial ones. We shall call such gradings  \textit{almost non-degenerate}. This notion is related to the so called {\textit{non-degenerate gradings}}, (see \cite[Definition 1.1]{AO}). Such gradings are defined as gradings $R=\oplus_{g\in G}R_g$ such that given an $n$-tuple $(g_1,\dots,g_n)$ of elements of $G$, we have $R_{g_1}\cdots R_{g_n}\neq0$. Using the terminology of this paper, to be non-degenerate means $R$ does not satisfy any graded multilinear monomial identity. As examples, the canonical (or Vasilovsky) $\mathbb{Z}_n$-grading on $M_n({F})$ is non-degenerate, whereas the canonical $\mathbb{Z}$-grading on $M_n(F)$ does not satisfy this property. Indeed it is clear from the definition that if a finite dimensional graded algebra graded by a group $G$ has a non-degenerate $G$-grading, then $G$ is finite. Of course the notions of non-degenerate grading and almost non-degenerate grading are strictly related.
Notice that if $A$ is an algebra with an almost non-degenerate grading, then the grading on $A$ is non-degenerate if and only if its support coincides with the grading group. Hence if an algebra with an almost non-degenerate fails to be non-degenerate then it admits a trivial monomial identiy.

A relation between the grading group $G$ and the exponent of a PI-algebra $W$ ($\mathrm{exp}(W)$) that admits a non-degenerate grading is given in \cite{AO}: there exists an abelian subgroup $U$ of $G$ with $[G:U]\leq \mathrm{exp}\,(W)^K$, where $K$ is a constant not depending neither on $W$ nor $G$. The graded simple finite-dimensional algebras with a non-degenerate grading are easy to describe: a graded simple algebra $A$ has a non-degenerate grading if and only if the grading is strong, i.e., $A_gA_h=A_{gh}$ for every $g,h\in G$, see \cite[Lemma 3.3]{AO}. This motivates the following problem.
\begin{problem}\label{problem}
	Given a group $G$ determine the $G$-graded simple finite-dimensional algebras which are almost non-degenerate.
\end{problem} 
The above problem is reduced, using Proposition \ref{distinct-almost}, to the problem of describing the elementary gradings on matrix algebras with commutative neutral component that are non-degenerate. The polynomial identities of such gradings on $M_n(F)$ are completely described in \cite{BD}. However even for such gradings and $G=\Z$ the problem above is not trivial. In the last section we use Theorem \ref{monomialdegree} to solve Problem \ref{problem} for the $\Z$-graded matrix algebras $M_n(F)$, where $n\leq 5$ (see Theorems \ref{n=4} and \ref{n=5} in the text) and $F$ is a field of characteristic 0.

\section{Preliminaries}

All fields we refer to are assumed to be of characteristic zero and all algebras we consider are associative and unitary.

Let $G$ be a group with neutral element $1_G$ and let $F$ be a field. If $A$ is an $F$-algebra, the decomposition \[\Gamma:A=\bigoplus_{g\in G}A_g\] of $A$ as a direct sum of subspaces indexed by the elements of the group $G$ is called a \textit{$G$-grading} on $A$ if $A_{g}A_{h}\subseteq A_{gh}$ for every $g,h\in G$. The subspaces $A_{g}$, $g\in G$, are called \textit{homogeneous components} of the grading. An algebra $A$ with a fixed $G$-grading is said to be $G$-graded.
If $0\neq a\in A_g$ we say that $a$ is \textit{homogeneous of degree $g$}  
and we write $\deg a = g$. We define the \textit{support of $\Gamma$} as \[\spo(\Gamma):=\{g\in G|A_g\neq0\}.\] 

An ideal $I$ of $A$ is \textit{graded} by the group $G$ if $I = \oplus_{g\in G}(I\cap A_g).$ A graded algebra $A$ is called \textit{graded simple} if $A^2\neq0$ and the only graded ideals of $A$ are the zero ideal and $A$. Moreover a graded algebra is a \textit{graded division algebra} if every non-zero homogeneous element is invertible.

In order to compare two different gradings we need the notions of isomorphic, weak isomorphic and equivalent gradings.

Let $\Gamma_1:A=\oplus_{g\in G}A_g$ and $\Gamma_2:B=\oplus_{h\in H}B_h$ be a $G$-grading on $A$ and an $H$-grading on $B$ respectively. Let $\varphi:A\rightarrow B$ be an isomorphism of algebras. We say $\varphi$  is an \textit{isomorphism} of the gradings $\Gamma_1$ and $\Gamma_2$ if $G=H$, and for each $g\in G$, $\varphi(A_g)=B_g$.
If there exists an isomorphism of groups $\alpha:G\rightarrow H$ such that $\varphi(A_{g})=B_{\alpha(g)}$ for all $g\in G$, then we say $\varphi$ is a \textit{weak isomorphism} of the gradings $\Gamma_1$ and $\Gamma_2$.
We say that $\varphi$ is an \textit{equivalence of the gradings} $\Gamma_1$ and $\Gamma_2$ if there exists a bijection $\alpha:\spo(\Gamma_1)\rightarrow \spo(\Gamma_2)$ such that $\varphi(A_{g})=B_{\alpha(g)}$ for every $g\in G$. 

Note that weakly isomorphic gradings are equivalent, but the converse, however, does not hold. For example, the $\mathbb{Z}$-gradings on $M_n(F)$ induced by $(0,1,\dots, n-1)$ and $(0,d,\dots, d(n-1))$, for $d>1$, are equivalent but are not weakly isomorphic.

Another important notion we are going to use is that of a coarsening of a grading.
Let $\Gamma:A=\oplus_{g\in G} A_g$ and $\Gamma^{\prime}:A=\oplus_{h\in H}A_h^{\prime}$ be a $G$-grading and an $H$-grading on $A$ respectively. We say $\Gamma^{\prime}$ is a \textit{coarsening} of $\Gamma$ if for every $g\in G$ there exists $h\in H$ such that $A_g\subseteq A_h^{\prime}$.

Let us list some useful examples of gradings.
Any algebra can be endowed with a \textit{trivial} $G$-grading, where $G$ is any group, if we set $A_{1_G}=A$ and for each $g\neq 1_G$ $A_g=0$. This is called \textit{trivial grading} on $A$. Let $E$ be the algebra generated by a countable infinite set $\{e_1, e_2, \dots\}$ with the condition $e_ie_j=-e_je_i$ for all $i,j\geq 1$. This algebra is called \textit{Grassmann algebra} and it can be seen its basis as a vector space is given by $\mathcal{B}=\mathcal{B}_0\cup\mathcal{B}_1$, where $\mathcal{B}_0=\{e_{i_1}\cdots e_{i_k}\,|\,i_1<\cdots <i_k, \,\text{$k$ is even}\}\cup \{1\}$ and $\mathcal{B}_1=\{e_{i_1}\cdots e_{i_k}\,|\,i_1<\cdots <i_k, \,\text{$k$ is odd}\}.$ We denote by $E_i$ the subspace of $E$ generated by $\mathcal{B}_i$. Then it is easily seen that $E$ is $\Z_2$-graded by $E=E_0\oplus E_1$ which is called \textit{canonical} $\Z_2$-grading of $E$.

A grading on a matrix algebra $R=M_n(F)$ is called \textit{elementary} if the elementary matrices $e_{ij}$ are homogeneous in the grading. Given an $n$-tuple $\overline{g}=(g_1,\ldots,g_n)$ of elements of $G$ we set $R_g=\spa\{e_{pq}\mid g_p^{-1}g_q=g\} $, then $R=\bigoplus_{g\in G}R_g$ is an elementary $G$-grading on $R$. Moreover, it was proved in \cite{Das} that every elementary grading on $M_n(F)$ can be defined this way. 
The importance of such gradings is that over an algebraically closed field of characteristic 0 every grading on $R$ by a finite group is obtained by a certain tensor product construction from an elementary grading and a fine grading (see \cite{BZ1}).

We will refer to the $\mathbb{Z}$-grading on $M_n(F)$ induced by $(0,1,\dots, n-1)$ as the \textit{canonical $\mathbb{Z}$-grading on $M_n(F)$}. Analogously the $\Z_n$-grading on $M_n(F)$ induced by $(\overline{0},\overline{1},\ldots,\overline{n-1})$ is called canonical $\Z_n$-grading on $M_n(F)$.

In order to give the analogous definition of a polynomial identity for graded algebras, we define a free object in the class of $G$-graded algebras.
Let $\{X^{g}\mid g \in G\}$ be a family of disjoint countable sets of variables. Set $X=\bigcup_{g\in G}X^{g}$ and denote by $\F$ the free associative algebra freely generated by the set $X$ over $F$. If $x\in X^g$, we set $\deg x=g$. For a monomial $m=x_{i_1}x_{i_2}\cdots x_{i_k}$ we set $\deg m=\deg x_{i_1} \cdot  \deg x_{i_2}\cdots\deg x_{i_k}$. Given $g \in G$ we denote by $\F_g$ the subspace of $\F$ spanned by the set $\{m=x_{i_1}x_{i_2}\cdots x_{i_k}\mid \deg m=g\}$. Notice that $\F_g\F_{h}\subseteq \F_{gh}$ for all $g,h \in G$. Thus \[\F=\bigoplus_{g\in G}\F_g\] is a $G$-graded algebra. We refer to the elements of $\F$ as \textit{graded polynomials}. 

An ideal $I$ of $\F$ is said to be a \textit{$T_{G}$-ideal} if it is invariant under all $F$-endomorphisms $\varphi:\F\rightarrow\F$ such that $\varphi\left(\F_g\right)\subseteq\F_g$ for all $g\in G$. A subset $S\subseteq I$ is a \textit{basis} for the $T_G$-ideal $I$ if $I$ is the intersection of all the $T_G$-ideals of $\F$ containg $S$. 

If $A$ is a $G$-graded algebra, a polynomial $\fx\in\F$ is said to be a \textit{graded polynomial identity} of $A$ if $f(a_1,a_2,\ldots,a_n)=0$ for all $a_1,a_2,\ldots,a_n\in\bigcup_{g\in G}A_g$ such that $a_k\in A_{\deg x_k}$, $k=1,\ldots,n$. If $A$ satisfies a non-trivial graded polynomial identity, $A$ is said to be a \textit{($G$-)graded PI-algebra}. We denote by $T_G(A)$ the set of all graded polynomial identities of $A$, it is clear that $T_G(A)$ is a $T_G$-ideal of $\F$. We remark that if the group $G$ is finite, then for every $G$-graded algebra $A$ the $T_G$-ideal $T_G(A)$ admits a finite basis, see \cite{AB}, \cite{I}. In the case the group $G$ is the trivial one we shall refer to ordinary polynomials (or simply polynomials), polynomial identities, $T$-ideals, etc.

Let $R$ be the matrix algebra $M_n(F)$ with an elementary grading induced by the $n$-tuple $\overline{g}=(g_1,\dots,g_n)\in G^{n}$. Of course, the neutral component $R_{1_G}$ is commutative if and only if the entries of $\overline{g}$ are pairwise distinct. In this case the following are graded polynomial identities for $R$.
\begin{eqnarray}
x_{1}x_{2}-x_{2}x_{1}=0, \deg x_1 =\deg x_2= 1_G \label{(3)}\\
x_{1}x_{2}x_{3}-x_{3}x_{2}x_{1}=0,  \deg x_1=\deg x_2^{-1} = \deg x_3\neq 1_G \\ 
x_{1}=0, R_{\deg x_1}=0 \label{(5)}.
\end{eqnarray}
In the identities of (2), we may consider $R_{\deg {x_3}}\neq0$  and  $R_{\deg {x_2}}\neq0$.

If $S$ is a subalgebra of $R$ admitting a linear basis consisting of elementary matrices, then $S=\oplus_{g\in G}S_g$, where $S_g=S\cap R_g$, is a $G$-grading on $S$.

In Theorem 3.7 of \cite{DM}, the authors prove the next result.

\begin{theorem}
Let $F$ be an infinite field and let $G$ be a group. Let us consider $M_n(F)$ with the elementary $G$-grading induced by the $n$-tuple $\overline{g}=(g_1,\dots, g_n)\in G^{n}$, where the elements $g_1,\dots, g_n$ are pairwise different. If $S$ is a subalgebra of $M_n(F)$ generated by elementary matrices $e_{ij}$, then a basis of the graded polynomial identities of $S$ consists of $(\ref{(3)})-(\ref{(5)})$ and a finite number of graded monomial identities of degree $p$ where $2\leq p\leq 2n-1$.
\end{theorem}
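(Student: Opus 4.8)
The plan is to argue in two stages: first, to show that modulo the identities $(\ref{(3)})$--$(\ref{(5)})$ every graded identity of $S$ is a consequence of graded \emph{monomial} identities; and second, to bound the degrees of the monomial identities actually needed. Over the infinite field $F$ the ideal $T_G(S)$ is generated by its multihomogeneous elements, and in characteristic zero by its multilinear ones, so I would fix variables $x_1,\dots,x_k$ of prescribed homogeneous degrees $h_1,\dots,h_k$ and work with a multilinear $f=\sum_{\sigma}c_\sigma x_{\sigma(1)}\cdots x_{\sigma(k)}$; splitting $f$ into $G$-homogeneous components, I may assume all its monomials share a single total degree $g$, which already restricts the admissible permutations. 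The essential feature of the grading is that $g_1,\dots,g_n$ are pairwise distinct, so for a fixed row index and a fixed homogeneous degree there is at most one matrix unit of $S$ with that row and degree. Hence, once the starting row is chosen, a graded monomial is evaluated along a uniquely determined path of elementary matrices and its value is a single matrix unit $e_{pq}$ (or zero); to each monomial I attach its \emph{value map} on homogeneous substitutions.

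The heart of the first stage is the lemma that two monomials in the same variables and of the same total degree which induce the \emph{same} value map are congruent modulo the $T_G$-ideal generated by $(\ref{(3)})$--$(\ref{(5)})$. I would prove this by producing a normal form: using $(\ref{(3)})$ to commute the diagonal ($1_G$-)variables past one another and the reversal identity $x_1x_2x_3-x_3x_2x_1$ to flip the admissible palindromic blocks of degrees $h,h^{-1},h$, one transports any monomial to a canonical representative of its value class, while $(\ref{(5)})$ discards any monomial containing a variable of degree outside $\spo(\Gamma)$. Granting this, group the monomials of $f$ by value map; writing $\lambda_C$ for the coefficient sum of a class $C$ and $m_C$ for a representative, the congruences give $f\equiv\sum_C\lambda_C m_C$ modulo $(\ref{(3)})$--$(\ref{(5)})$. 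Distinct nonzero value maps are linearly independent as $M_n(F)$-valued functions (a separating substitution isolates a prescribed class), so, $f$ being an identity, every nonvanishing class has $\lambda_C=0$, whence $\sum_{m\in C}c_m m=\sum_{m\in C}c_m(m-m_C)$ lies in the ideal. The surviving classes consist of monomials with identically zero value map, i.e. genuine monomial identities of $S$. This shows $T_G(S)$ is generated by $(\ref{(3)})$--$(\ref{(5)})$ together with the graded monomial identities of $S$.

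For the second stage I would model $S$ by the composition-closed set $\Lambda\subseteq\{1,\dots,n\}^2$ of positions of its basic matrix units, noting that a monomial with degree sequence $h_1,\dots,h_k$ is an identity of $S$ exactly when no path $p_0\to p_1\to\cdots\to p_k$ with $(p_{j-1},p_j)\in\Lambda$ and $g_{p_{j-1}}^{-1}g_{p_j}=h_j$ exists. A monomial identity $w$ is a consequence of shorter ones precisely when some contiguous subinterval of $w$, after merging consecutive variables into blocks, gives a strictly shorter degree sequence that is still an identity; here I use that $\Lambda$ is closed under composition, so every refined path projects onto a single $\Lambda$-edge of the composite degree. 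It therefore suffices to prove that an \emph{irreducible} monomial identity has length at most $2n-1$. Tracking the forward reachable sets $I_j\subseteq\{1,\dots,n\}$ (endpoints of length-$j$ paths) and the backward sets $B_j$ (vertices from which the suffix completes), irreducibility forces $I_j$ and $B_j$ to be nonempty and disjoint for $0<j<k$ while $|I_j|$ runs from $n$ down to $0$, and the interplay of these two quantities yields the bound $2n-1$.

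The main obstacle is exactly the \emph{merge-safety} step. Coarsening a block can create a spurious $\Lambda$-edge of composite degree that has no refined realization, so a naive contraction at a plateau of the reachable set may destroy the identity; controlling when a merge preserves emptiness of the reachable set is the delicate point. Showing that, after at most $n$ genuine contractions of the forward-reachable set and at most $n$ further adjustments, the path is forced to terminate is where the estimate $2n-1$ is spent, and it is precisely the sharpening of this reachability analysis that is needed to bring the bound down to $n$. Finiteness of the basis is then automatic, since there are only finitely many degree sequences of length at most $2n-1$ with degrees in the finite set $\spo(\Gamma)$.
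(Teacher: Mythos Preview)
This theorem is not proved in the present paper; it is quoted from \cite{DM} (their Theorem~3.7), so there is no proof here to compare your attempt against directly. I will therefore assess your outline on its own merits.

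Your first stage is essentially the approach of \cite{BD} and \cite{DM}: two graded monomials in the same variables that induce the same evaluation function on $S$ are congruent modulo the $T_G$-ideal generated by $(\ref{(3)})$--$(\ref{(5)})$, and linear independence of the distinct nonzero evaluation functions then reduces any multilinear identity to a combination of monomial identities. That skeleton is correct and matches the known argument.

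The second stage, however, is not a proof. You set up forward and backward reachable sets $I_j$, $B_j$ and observe correctly that for an irreducible monomial identity they are nonempty and disjoint with $|I_j|$ non-increasing; but nothing here bounds the length $k$. The sequence $|I_j|$ may have arbitrarily long plateaus, and disjointness only yields the pointwise constraint $|I_j|+|B_j|\le n$, which says nothing about $k$. You yourself name the obstruction---``merge-safety'': at a plateau one would like to replace $x_jx_{j+1}$ by a single variable of degree $h_jh_{j+1}$, but for a proper subalgebra one may have $S_{h_j}S_{h_{j+1}}\subsetneq S_{h_jh_{j+1}}$, so the contracted monomial need not remain an identity---and you do not overcome it. The sentence ``after at most $n$ genuine contractions of the forward-reachable set and at most $n$ further adjustments, the path is forced to terminate'' is an assertion, not an argument: you must explain \emph{which} contractions are safe and \emph{why} irreducibility limits the number of plateau steps, and that is precisely the content left out. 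Until this gap is filled, your outline does not establish the bound $2n-1$.
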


Determining a finite set of monomial identities as in the previous theorem is not a trivial task and some examples can be found in \cite{BD}, \cite{CM}. In \cite{BD} the authors proved the graded monomial identities of $M_n(F)$ are bounded by a function $f(n)$ whereas in \cite{CM} and in \cite{DM} it is conjectured that every graded monomial identity of the subalgebra $S$ as in the previous theorem follows from the graded monomial identities of degree at most $n$. In the next section we are going to prove such conjecture for $S=M_n(F)$. We start off with the following definition.

\begin{definition}\cite[Definition 17]{DN}
Let $R$ be a $G$-graded algebra, let
$I_0$ be the $T_G$-ideal generated by the set $\{x_1\mid \deg(x_1)\notin \spo(R)\}$. A monomial $M\in T_G(R)$ is a \textit{trivial graded monomial identity} for $R$ if $M\in I_0$, otherwise we say that $M$ is a non-trivial graded monomial identity for $R$.
\end{definition}

We recall that the $G$-grading $\Gamma:R=\oplus_{g\in G} R_g$ on the algebra $R$ is called \textit{non-degenerate} if for every tuple $(g_1,\dots, g_r)\in G^{r}$ we have $R_{g_1}\cdots R_{g_r}\neq 0$, see \cite[Definition 1.1]{AO}. Equivalently the grading $\Gamma$ is non-degenerate if it admits no monomial identity and in this case $\spo(\Gamma)=G$. The grading $\Gamma$ is called \textit{strong} if $R_gR_h=R_{gh}$. It is clear that if $\Gamma$ is strong and $\spo(\Gamma)=G$, then it is also non-degenerate. The canonical $\mathbb{Z}_n$-grading on $M_n(F)$ is a strong grading and, in particular, it is also a non-degenerate grading. The canonical $\mathbb{Z}$-grading on $M_n(F)$ is not a non-degenerate grading because it satisfies the identities $x_1$, where $|\deg x_1|\geq n$. This grading, however, satisfies no non-trivial graded monomial identities (see \cite{V} for the description of its graded identities). With this example in mind, we introduce here the concept of almost non-degenerate grading.

\begin{definition}
The $G$-grading $R=\oplus_{g\in G} R_g$ on the algebra $R$ is \textit{almost non-degenerate} if it satisfies no non-trivial multilinear graded monomial identities.
\end{definition}

In Section \ref{4} we consider the problem of determining the almost non-degenerate gradings on matrix algebras. A more specific problem is to determine the almost non-degenerate $\Z$-gradings on $M_n(F)$. Even this is a non-trivial task, and we provide necessary conditions so that certain gradings on $M_n(F)$ are almost non-degenerate and we completely solve this problem for $n\leq 5$.

Let $n$ be a positive integer $\overline{g}=(g_1,\dots, g_n)$ be a tuple of elements of the group $G$ and let $D=\oplus_{g\in G}D_g$ be a $G$-graded algebra. Then it is easy to verify that the algebra $M_n(F)\otimes D$ may be endowed with a $G$-grading such that the element $e_{ij}\otimes d$ is homogeneous of degree $g_i^{-1}hg_j$ for every $1\leq i,j \leq n$ and every $d\in D_h$. We refer to this as the grading on $M_n(F)\otimes D$ induced by $\overline{g}$. If $D=\oplus_{g\in G}$ is a division grading then $M_n(F)\otimes D$ is a graded simple algebra. The next result implies every finite-dimensional graded simple algebra is obtained in this way, it is a direct consequence of \cite[Corollary 2.12]{EK}.

\begin{theorem}\label{gsimple}
Let $G$ be a group and let $R$ be a $G$-graded algebra. If $R$ is a finite-dimensional graded simple algebra, then there exist a positive integer $n$, an $n$-tuple ${\overline{g}}$ of elements of $G$ and a graded division algebra $D$ such that $R$ is isomorphic to $M_n(F)\otimes D$ with the grading induced by $\overline{g}$.
\end{theorem}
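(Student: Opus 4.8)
The plan is to deduce the statement directly from the structure theorem for finite-dimensional graded-simple associative algebras recorded in \cite[Corollary 2.12]{EK}. That result asserts that a finite-dimensional $G$-graded-simple algebra $R$ is isomorphic, as a $G$-graded algebra, to the endomorphism algebra $\mathrm{End}_D(V)$ of a finite-dimensional graded module $V$ over a graded division algebra $D$. So the first step is simply to invoke that corollary and extract the pair $(D,V)$; finite-dimensionality of $R$ forces both $D$ and $V$ to be finite-dimensional.

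The second step is to turn $\mathrm{End}_D(V)$ into the desired matrix form. Since $D$ is a graded division algebra, every non-zero homogeneous element is invertible, and a standard argument then shows that $V$ admits a homogeneous $D$-basis $v_1,\dots,v_n$, where $n=\dim_D V$. Setting $g_i=\deg v_i$ and $\overline{g}=(g_1,\dots,g_n)$, and expressing each endomorphism as a matrix relative to this basis, yields an algebra isomorphism $\mathrm{End}_D(V)\cong M_n(D)\cong M_n(F)\otimes D$. It then remains to verify that this isomorphism carries the grading of $\mathrm{End}_D(V)$ to the grading on $M_n(F)\otimes D$ induced by $\overline{g}$; that is, that the image of a homogeneous element $e_{ij}\otimes d$ with $d\in D_h$ is homogeneous of degree $g_i^{-1}hg_j$, matching the construction fixed earlier in the text.

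This last verification is where the care is needed, though it is bookkeeping rather than a genuine obstacle. One computes the degree of the endomorphism corresponding to $e_{ij}\otimes d$ directly from its action on the basis $v_1,\dots,v_n$, and compares it with the prescribed formula $g_i^{-1}hg_j$. The only point to watch is the choice of conventions: whether $V$ is taken as a left or right $D$-module, and whether the degrees of the basis vectors enter as $g_i$ or as $g_i^{-1}$. Matching these conventions — if necessary by replacing $\overline{g}$ with its entrywise inverse, which leaves the isomorphism type of the graded algebra unchanged — reproduces exactly the induced grading, and the theorem follows. I do not expect any serious difficulty beyond this alignment of conventions, precisely because the structural content is already supplied by \cite[Corollary 2.12]{EK}.
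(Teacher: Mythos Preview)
Your approach is correct and matches the paper's treatment exactly: the paper does not give a separate proof of this theorem but simply states that it is a direct consequence of \cite[Corollary 2.12]{EK}. Your expansion of the bookkeeping needed to pass from $\mathrm{End}_D(V)$ to $M_n(F)\otimes D$ with the grading induced by $\overline{g}$ is precisely the content implicit in that citation.
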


\section{Monomial Identities in Matrix Algebras}
In this section we prove that all multilinear graded monomial identities of the full matrix algebra of size $n$ follow from those of degree $n$ provided the grading is elementary. 

First, we mention that for elementary gradings a monomial identity is a consequence of a suitable monomial identity.

\begin{remark}\label{multrem}
A monomial $M=x_{i_1}\cdots x_{i_m}$ is a graded monomial identity for a n elementary $G$-grading $A$ on a matrix algebra if and only if the multilinear monomial $\tilde{M}=x_1\cdots x_m$, where $\mathrm{deg}_G\, x_j=\mathrm{deg}_G\, x_{i_j}$, is a graded identity for $A$. Hence an elementary $G$-grading on a matrix algebra is almost non-degenerate if and only if it satisfies no non-trivial monomial identity.
\end{remark}

This is not true in general. For instance, let us consider the Grassmann algebra $E$ with its canonical $\Z_2$-grading. Of course $E$ satisfies the monomial identity $z^2$, where $z$ is an indeterminate of degree $\overline 1$, whereas it does not satisfy any multilinear monomial identity. 

Let us show now that when dealing with monomial identities in elementary graded matrix algebras induced by an n-tuple $(g_1,\dots, g_n)$ of elments of $G$, it is not a restriction for our purposes to consider $g_1,\dots, g_n$ to be pairwise distinct.

\begin{proposition}\label{distinct-almost}
	Let $A=B\otimes D$ be a $G$-graded matrix algebra, where $B=M_n(F)$ with the elementary grading induced by $(g_1,\dots, g_n)$ and let $D$ be a $G$-graded division algebra with support $H$. Let $h_1,\dots, h_k$ be pairwise different elements of the group $G$ such that $\{g_1H,\dots, g_nH\}=\{h_1H,\dots, h_kH\}$. Let $A_0=M_k(F)\otimes D$, be the grading induced by $(h_1,\dots, h_k)$. Then $M=x_{1}\dots x_{m}$ is a graded monomial identity for $A$ if and only if it is a graded monomial identity for $A_0$. In particular, the algebra $A$ has an almost non-degenerate grading if and only if $A_0$ has an almost non-degenerate grading.
\end{proposition}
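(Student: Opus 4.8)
The plan is to set up a dictionary between graded monomials on $A$ and on $A_0$ that preserves the property of being a graded monomial identity, and then read off the statement about almost non-degeneracy as a corollary.

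First I would recall what it means for a monomial $M = x_1 \cdots x_m$ with prescribed homogeneous degrees $\deg x_j = a_j \in G$ to be a \emph{non}-identity for an elementary grading of the form $M_k(F) \otimes D$ induced by a tuple $(h_1,\dots,h_k)$. A nonzero evaluation of $M$ requires a choice of homogeneous basis elements $e_{p_j q_j} \otimes d_j$ with $d_j \in D_{c_j}$ (so $c_j \in H$) such that each factor lands in the correct homogeneous component, i.e. $h_{p_j}^{-1} c_j h_{q_j} = a_j$, and such that the product is nonzero, i.e. the column index matches the next row index, $q_j = p_{j+1}$, and the accumulated product of the $d_j$'s in $D$ is nonzero. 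Because $D$ is a \emph{graded division} algebra, any product of nonzero homogeneous elements of $D$ is again a nonzero homogeneous element; hence the only real constraint coming from $D$ is that each $c_j$ must lie in $H = \spo(D)$. Thus $M$ is a non-identity for $M_k(F)\otimes D$ precisely when there is a path $p_1 \to p_2 \to \cdots \to p_{m+1}$ in the index set $\{1,\dots,k\}$ together with elements $c_j \in H$ satisfying $h_{p_j}^{-1} c_j h_{p_{j+1}} = a_j$ for all $j$. Equivalently, rewriting, there must exist indices with $a_j \in h_{p_j}^{-1} H\, h_{p_{j+1}}$ and the consecutive-index (path) condition holds. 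I would record this combinatorial reformulation as the key lemma.

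Next I would exploit the hypothesis $\{g_1 H, \dots, g_n H\} = \{h_1 H, \dots, h_k H\}$ with the $h_i H$ pairwise distinct. The map sending an index $i \in \{1,\dots,n\}$ to the unique $\sigma(i) \in \{1,\dots,k\}$ with $g_i H = h_{\sigma(i)} H$ is a well-defined surjection, and the crucial point is that the condition $a_j \in h_p^{-1} H h_q$ depends only on the cosets $h_p H$ and $h_q H$ (indeed $h_p^{-1} H h_q = (h_p H)^{-1}(h_q H)$ as a subset of $G$, since $H$ is the support of a division grading and hence a subgroup). Therefore the set of "admissible degrees" for a transition between two indices is unchanged when we replace a $g$-index by the corresponding $h$-index, and vice versa. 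What remains is to check that a nonzero path exists on the $(g_1,\dots,g_n)$ side if and only if one exists on the $(h_1,\dots,h_k)$ side: given a path on $A$ one pushes it forward through $\sigma$ to a path on $A_0$; conversely, given a path on $A_0$ one lifts each index $h_{p} H$ back to any $g$-index in its fiber, which is always possible since $\sigma$ is surjective and the admissibility of each transition is coset-dependent only. This establishes that $M$ is a monomial identity for $A$ iff it is one for $A_0$.

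Finally, the statement about almost non-degeneracy follows formally. By Remark \ref{multrem} an elementary grading on a matrix algebra is almost non-degenerate exactly when it has no non-trivial monomial identity, and a monomial identity is trivial precisely when one of its variable degrees lies outside the support. Since $A$ and $A_0$ have the same support (the supports are determined by the same set of cosets together with $H$, which the coset hypothesis guarantees are equal), the trivial monomial identities coincide as well; combined with the equivalence just proved for arbitrary monomials, $A$ admits a non-trivial monomial identity iff $A_0$ does. Hence $A$ is almost non-degenerate iff $A_0$ is. \textbf{The main obstacle} I anticipate is the careful bookkeeping in the path/lifting argument — in particular verifying that the division-algebra factor genuinely contributes no obstruction beyond membership in $H$ (this is where graded invertibility is essential, and it is the step that would fail for a general $D$), and confirming that the coset-dependence of the transition condition is exactly what makes $\sigma$ and its section respect non-identities in both directions.
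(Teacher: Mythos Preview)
Your approach is correct and is essentially the same as the paper's: both arguments exploit that $D$ is a graded division algebra (so the $D$-factor of a product of nonzero homogeneous simple tensors never vanishes) and then match nonzero evaluations in $A$ and in $A_0$ via the index map determined by the coset hypothesis---what you call pushing forward and lifting paths along $\sigma$ is exactly the paper's graded embedding $A_0\hookrightarrow A$ in one direction and its projection $i\mapsto i'$ (after first reducing, up to graded isomorphism, to the case $g_i\in\{h_1,\dots,h_k\}$) in the other. One small caution: your parenthetical identity $h_p^{-1}Hh_q=(h_pH)^{-1}(h_qH)$ is only literally valid when $H$ is normal in $G$ (so certainly in the abelian case that drives all later applications); the paper sidesteps this by invoking the graded-isomorphism reduction rather than a raw set computation, and you should do the same or restrict to abelian $G$.
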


\begin{proof}
Note that $\mathrm{supp}\, A=\mathrm{supp}\, A_0$. The result now follows if we prove that a multilinear monomial is an identity for $A$ if and only if it is an identity for $A_0$. Let $M=x_1\cdots x_m$ be a monomial in the free $G$-graded algebra. Note that $A_0$ is isomorphic to a suitable subalgebra of $A$. Hence if $M$ is a graded identity for $A$, then $M$ is a graded identity for $A_0$. Now we assume that $M$ is a graded identity for $A_0$. We may assume up to isomorphism that the entries of the tuple $(g_1,\dots, g_n)$ lie in the set $\{h_1,\dots,h_k\}$ and that the first $q_1$ entries are $h_1$, the next $q_2$ entries are $h_2$ and so on. Let $d_i=q_1+\cdots +q_i$ for $i=1,\dots, k$. Let $J_1=\{1,\dots d_1\}$ and $J_i=\{d_{i-1}+1,\dots,d_i\}$ for $i=2,\dots, k$. Note that $\cup_l J_l$ is a partition of $\{1,\dots, n\}$. Then $g_i=h_l$ if $i\in J_l$. For each $i$ in $\{1,\dots,n\}$ there exists a unique $l$ such that $i\in J_l$ and we denote by $i^{\prime}$ the smallest integer in $J_l$. Now let $(E_{i_1,j_1}\otimes d_1,\dots, E_{i_m,j_m}\otimes d_m)$ be an admissible substitution on $M$ by elements of $A$ and let $r$ be the result of this substitution, then 
\begin{align*}
r=(E_{i_1,j_1}\otimes d_1)\cdot\dots\cdot (E_{i_m,j_m}\otimes d_m)\\=E_{i_1j_1}\cdots E_{i_mj_m}\otimes d_1\cdots d_m.
\end{align*}
Now let 
\begin{align*}
r^{\prime}:=(E_{i_1^{\prime},j_1^{\prime}}\otimes d_1)\cdot\dots\cdot (E_{i_m^{\prime},j_m^{\prime}}\otimes d_m)\\=E_{i_1^{\prime}j_1^{\prime}}\cdots E_{i_m^{\prime}j_m^{\prime}}\otimes d_1\cdots d_m.
\end{align*}
If $r\neq 0$, then $r^{\prime}\neq 0$. On the other hand $r^{\prime}$ is the result of an admissible substitution for $M$ by elements of $A^{\prime}$, where $A^{\prime}=B^{\prime}\otimes D$ and $B^{\prime}$ is the subalgebra of $B$ generated by the elementary matrices $E_{i,j}$, where $i,j\in \{1^{\prime},\dots, n^{\prime}\}$. As a consequence $M$ is not a graded identity for $A^{\prime}$. However $A^{\prime}$ is isomorphic to $A_0$ and that is a contradiction. Thus $r=0$ and $M$ is a graded identity for $A$.
\end{proof}

Until the end of this section $R$ will denote the algebra $M_n(F)$ with the elementary grading induced by an $n$-tuple $\overline{g}=(g_1,\ldots,g_n)$ of pairwise distinct elements of $G$. For every $g\in \spo(R)$ let $\{e_{i_1j_1},\dots, e_{i_l,j_l}\}$ be the basis of $R_{g}$ consisting of elementary matrices. Let $I_g=\{i_1,\dots, i_l\}$, $J_g=\{j_1,\dots, j_l\}$. Since the entries of $\overline{g}$ are pairwise distinct given $i\in I_g$ there exists a unique $j\in J_g$ such that $e_{ij}\in R_g$. This defines an injective map
$\hat{g}:I_{g}\rightarrow J_g$, this a map was introduced in \cite{DM}.

We start off with the next result about graded multilinear monomial identities, which is a key step in the proof of the main theorem of this section.

\begin{lemma}\label{deg=e}
	Let $G$ be a group and let $R$ be the elementary $G$-grading on $M_n(F)$ induced by the $n$-tuple $(g_1,\dots,g_n)$ of pairwise distinct elements of $G$. For $h\in \spo(R)$, let us denote \[\displaystyle M_{h}:=\sum_{g_k^{-1}g_l=h}{e_{kl}}.\] Let $h_1,\dots, h_k$ be elements of $\spo(R)$. If $h_1\cdots h_k=1_G$, then the number of non-zero lines in $M_1=M_{h_1} M_{h_2}\cdots M_{h_k}$ and $M_2= M_{h_2}\cdots M_{h_k}$ is the same.
\end{lemma}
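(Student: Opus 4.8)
The plan is to read each $M_h$ as a $\{0,1\}$-matrix and to track which of its rows survive multiplication. The first step is to record the structural consequence of the $g_i$ being pairwise distinct: for a fixed row index $k$ there is at most one column index $l$ with $g_k^{-1}g_l=h$, and symmetrically at most one row per column, so $M_h$ is a partial permutation matrix. Its row rule is exactly the injective map $\hat h$ introduced above: when it is defined, $\hat h(a)$ is the unique index $c$ with $g_c=g_a h$. Writing $\Omega:=\{g_1,\dots,g_n\}$, a short computation with elementary matrices then shows that row $a$ of a product $M_{t_1}\cdots M_{t_r}$ is non-zero if and only if every partial product $g_a t_1\cdots t_j$ ($1\le j\le r$) lies in $\Omega$; concretely this traces the unique path $a\mapsto\hat{t_1}(a)\mapsto\cdots$ for as long as it does not leave $\Omega$.

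The heart of the argument is to produce a bijection between the non-zero rows of $M_1=M_{h_1}M_2$ and those of $M_2=M_{h_2}\cdots M_{h_k}$, given by $\phi:=\hat{h_1}$. Since $M_{h_1}$ has at most one non-zero entry in each row, row $a$ of the product $M_{h_1}M_2$ equals row $\hat{h_1}(a)$ of $M_2$ whenever $\hat{h_1}(a)$ is defined, and is zero otherwise. This makes two things immediate: $\phi$ carries non-zero rows of $M_1$ to non-zero rows of $M_2$, and $\phi$ is injective because $\hat{h_1}$ is (if $g_ah_1=g_{a'}h_1$ then $a=a'$). The only real content is surjectivity, and this is precisely where the hypothesis $h_1\cdots h_k=1_G$ is used.

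For surjectivity, take a non-zero row $c$ of $M_2$. By the characterization above, $g_c h_2\cdots h_j\in\Omega$ for all $2\le j\le k$; the terminal case $j=k$ reads $g_c h_2\cdots h_k\in\Omega$, and since $h_1\cdots h_k=1_G$ we have $h_2\cdots h_k=h_1^{-1}$, so $g_c h_1^{-1}\in\Omega$. Thus there is an index $a$ with $g_a=g_c h_1^{-1}$, i.e.\ $\hat{h_1}(a)=c$. It then remains to check that $a$ is itself a non-zero row of $M_1$: its partial products are $g_a h_1=g_c\in\Omega$ (the case $j=1$) and $g_a h_1\cdots h_j=g_c h_2\cdots h_j\in\Omega$ for $2\le j\le k$, which are exactly the conditions witnessing that $c$ is a non-zero row of $M_2$. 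Hence $\phi(a)=c$ with $a$ admissible, $\phi$ is onto, and the two row-sets are in bijection, giving the claimed equality.

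I expect surjectivity to be the only delicate point. Since left multiplication by the partial permutation matrix $M_{h_1}$ can only delete or relabel rows, one always has that the number of non-zero rows of $M_1$ is at most that of $M_2$ with no hypothesis at all; the closing relation $h_1\cdots h_k=1_G$ is exactly what guarantees that the terminal degree $g_c h_2\cdots h_k$ of each surviving path of $M_2$ lands back inside $\Omega$, furnishing the required preimage and upgrading the inequality to the desired equality.
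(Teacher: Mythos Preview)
Your argument is correct and follows essentially the same route as the paper: both produce the bijection $\hat{h}_1$ between the non-zero rows of $M_1$ and of $M_2$, with injectivity immediate and surjectivity coming from $h_2\cdots h_k=h_1^{-1}$. The only cosmetic difference is in the surjectivity step: the paper observes directly that $M_2$ is homogeneous of degree $h_1^{-1}$, so any non-zero entry of $M_2$ in row $j$ sits in a position $(j,i)$ with $\deg e_{ij}=h_1$, giving $j=\hat{h}_1(i)$ and $e_{ij}M_2\neq 0$; you reach the same preimage by unwinding your explicit ``all partial products lie in $\Omega$'' criterion and reading off the terminal case $j=k$.
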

	
	\begin{proof}
		Let $i_1,\dots,i_r$ be the non-zero lines of $M_1$ which are, of course, also the non-zero lines of $M_{h_1}$. Then $j_1=\widehat{h}_1(i_1)$, \dots, $j_r=\widehat{h}_1(i_r)$ are non-zero lines of $M_2$. The claim follows once we  prove $j_1,\ldots,j_r$ are exactly the non-zero lines of $M_2$.
		
		Since $M_{h_1}$ is homogeneous of degree $h_1$, we have $M_2$ is homogeneous of degree $h_1^{-1}$. If the $j$-th line of $M_2$ is non-zero, then there exists $i$ such that $e_{ji}$ has degree $h_1^{-1}$.
		Of course $\deg(e_{ij})=h_1$ and $j=\widehat{h}_1(i)$, where $i$ is a non-zero line of $M_1$, since $e_{ij}M_2\neq 0$ and the result follows.
	\end{proof}

As an easy consequence of the previous result we get the next result.

	\begin{lemma}
		Let $m=x_{1}\cdots x_{k}$ be a graded monomial identity of homogeneous degree $1_G$ of $R$. Then it is a consequence of the graded monomial identity $m'=x_{2}\cdots x_{k}$.
	\end{lemma}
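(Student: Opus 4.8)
The plan is to translate the statement ``$x_1\cdots x_k$ is a graded identity of $R$'' into a purely combinatorial statement about the matrices $M_h$ of Lemma \ref{deg=e}, and then to read off the conclusion from that lemma. Write $h_j=\deg x_j$, so that the hypothesis $\deg m=1_G$ reads $h_1\cdots h_k=1_G$; as in Lemma \ref{deg=e} I assume each $h_j\in\spo(R)$ (if some $\deg x_j$ lies outside the support, then $x_j$, and hence every monomial containing it, is a trivial identity, so the reduction is immediate). The first step is the reformulation. Since the $g_i$ are pairwise distinct, each $M_h$ is a $0$--$1$ matrix with at most one nonzero entry in every row and every column; consequently any product $M_{h_a}\cdots M_{h_b}$ again has nonnegative entries and no cancellation occurs. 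Because $R_{h_j}$ is spanned by the elementary matrices appearing in $M_{h_j}$, and products of elementary matrices satisfy $e_{pq}e_{rs}=\delta_{qr}e_{ps}$, an admissible substitution into $x_1\cdots x_k$ yields a scalar multiple of a single elementary matrix exactly when the corresponding path survives in $M_{h_1}\cdots M_{h_k}$. Hence $m=x_1\cdots x_k$ is a graded identity of $R$ if and only if $M_1:=M_{h_1}\cdots M_{h_k}=0$, and likewise $m'=x_2\cdots x_k$ is a graded identity if and only if $M_2:=M_{h_2}\cdots M_{h_k}=0$.

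The second step invokes the previous lemma. Since $m$ is assumed to be an identity we have $M_1=0$, so $M_1$ has no nonzero lines; and since $h_1\cdots h_k=1_G$, Lemma \ref{deg=e} guarantees that $M_2$ has exactly as many nonzero lines as $M_1$, namely none. Therefore $M_2=0$, and by the reformulation above $m'=x_2\cdots x_k$ is a graded identity of $R$. Finally, $m=x_1\,m'$ lies in the $T_G$-ideal generated by $m'$, because $T_G(R)$ is closed under left multiplication by $x_1$; thus $m$ is a consequence of $m'$, which is what the statement asserts.

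The only genuine work lies in the reformulation of the first paragraph: establishing that an elementary-graded multilinear monomial is an identity precisely when the associated product of $0$--$1$ matrices vanishes. I expect the main obstacle to be entirely bookkeeping, namely making precise that no cancellation can occur among the elementary matrices produced by an admissible substitution — this is exactly where the pairwise distinctness of the $g_i$, and hence the partial-permutation shape of each $M_h$, is used. Once this dictionary is in place, the conclusion is an immediate application of Lemma \ref{deg=e}, the point being that passing from $M_1$ to $M_2$ deletes the leading factor $M_{h_1}$ without altering the row-support as soon as the total degree equals $1_G$.
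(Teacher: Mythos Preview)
Your argument is correct and follows exactly the approach the paper intends: the paper states this lemma as ``an easy consequence'' of Lemma \ref{deg=e} without giving a proof, and what you have written is precisely the natural way to fill in that consequence. Your reformulation (that $x_1\cdots x_k$ is a graded identity if and only if $M_{h_1}\cdots M_{h_k}=0$, because each $M_h$ is a partial permutation matrix and hence no cancellation occurs) is exactly the dictionary being used implicitly throughout this section.
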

	
	Now we are ready for the main result of the section.

	\begin{theorem}\label{monomialdegree}
		Let $G$ be a group and let $R$ be the algebra $M_n(F)$ endowed with a $G$-grading.  If $m=x_{1}\cdots x_{k}$ is a graded monomial identity for $R$ and $k>n$, then $m$ is a consequence of a graded monomial identity of $R$ of degree at most $n$.
	\end{theorem}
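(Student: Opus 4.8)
The plan is to combine a reduction to a convenient normal form with an induction on the length $k$, the inductive step being a surgery that excises a cyclic block of trivial degree.

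First I would dispose of the two reductions already available. By Remark \ref{multrem} it suffices to treat multilinear monomials, and by Proposition \ref{distinct-almost} I may assume the defining $n$-tuple $(g_1,\dots,g_n)$ has pairwise distinct entries; then $R_{1_G}$ is exactly the space of diagonal matrices, because $e_{pq}\in R_{1_G}$ forces $g_p=g_q$ and hence $p=q$, and in particular $M_{1_G}=\sum_i e_{ii}=I$. Writing $M_h:=\sum_{g_p^{-1}g_q=h}e_{pq}$ as in Lemma \ref{deg=e}, the monomial $m=x_1\cdots x_k$ with $\deg x_i=h_i$ is an identity of $R$ if and only if $M_{h_1}\cdots M_{h_k}=0$. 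Since the $g_i$ are distinct, each $M_h$ has at most one nonzero entry in every row and every column, so every partial product is a $0$--$1$ matrix whose $(p,q)$ entry records the unique walk $v_0=p,v_1,\dots$ on the vertex set $\{1,\dots,n\}$ spelling the degrees $h_1,h_2,\dots$. Thus $m$ is an identity exactly when no such walk can be completed.

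I would then argue by induction on $k$, the case $k\le n$ being vacuous. For $k>n$ it is enough to produce a monomial identity $m'$ with fewer than $k$ letters such that $m$ is a consequence of $m'$: the inductive hypothesis applied to $m'$, together with transitivity of ``being a consequence'', finishes the proof. Two cheap reductions are available. If some proper consecutive subword of $m$ is already an identity, it does the job, because a monomial lies in the $T_G$-ideal generated by a monomial identity whenever it contains that identity as a consecutive factor. If instead $\deg m=h_1\cdots h_k=1_G$, then the preceding lemma shows $m$ is a consequence of the shorter identity $x_2\cdots x_k$. Hence I may assume $m$ is \emph{reduced}: no proper consecutive subword is an identity (so $M_{h_1}\cdots M_{h_j}\neq0$ for every $j<k$) and $\deg m\neq1_G$.

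Now comes the surgery. Fixing a nonzero entry of $M_{h_1}\cdots M_{h_{k-1}}$ gives a walk $v_0,v_1,\dots,v_{k-1}$ spelling $h_1,\dots,h_{k-1}$; since $k-1\ge n$ it uses at least $n+1$ vertices, so $v_a=v_b$ for some $a<b\le n$, and the block $x_{a+1}\cdots x_b$ has degree $h_{a+1}\cdots h_b=g_{v_a}^{-1}g_{v_b}=1_G$. Here the distinctness of the $g_i$ pays off again: a product of trivial total degree lands in $R_{1_G}$, so $M_{h_{a+1}}\cdots M_{h_b}$ is a diagonal $0$--$1$ matrix, namely the projection onto the set $T$ of vertices carrying a closed walk with colour word $h_{a+1}\cdots h_b$. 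Excising the block replaces this projection by $M_{1_G}=I$, and since the block has degree $1_G$, substituting the variable adjacent to the block by its product with the block is degree preserving, so $m$ is a consequence of $m'=x_1\cdots x_a\,x_{b+1}\cdots x_k$. The excision yields an \emph{identity} precisely when the deleted projection fixes the columns that are actually occupied, i.e.\ when every endpoint of a length-$a$ walk lies in $T$.

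The hard part is exactly this last condition: the excised cycle must bring \emph{every} currently live endpoint back to itself, not merely the one $v_a$ on the chosen walk. This is where a row-counting analysis in the spirit of Lemma \ref{deg=e} (and its transpose for columns) is needed: the assertion that across a trivial-degree block the number of nonzero rows is preserved is equivalent to saying that no live start is lost, and hence that the block acts as the identity on the occupied columns, making $M_{h_1}\cdots M_{h_a}M_{h_{b+1}}\cdots M_{h_k}$ agree with $M_{h_1}\cdots M_{h_k}=0$. The sharp value of the bound is explained by the same pigeonhole: a trivial-degree block is \emph{forced} to appear as soon as the length exceeds $n$, so a reduced identity of length $>n$ is always reducible and the induction bottoms out at length at most $n$. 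I expect the remaining steps to be routine bookkeeping with the deterministic-walk description; the genuine obstacle is controlling these extra live endpoints simultaneously with the chosen closed walk so as to guarantee that the excised monomial is again an identity.
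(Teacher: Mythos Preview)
Your overall framework---reduce to multilinear monomials over a grading with pairwise distinct tuple entries, set up the walk interpretation, pigeonhole to find a trivial-degree block, induct on length---matches the paper exactly. The divergence, and the gap, is at the reduction step itself.

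You propose to \emph{excise} the entire trivial-degree block $x_{a+1}\cdots x_b$ and claim that $m'=x_1\cdots x_a\,x_{b+1}\cdots x_k$ is again an identity. You correctly identify that this requires every endpoint $l$ that is simultaneously a nonzero column of $A=M_{h_1}\cdots M_{h_a}$ and a nonzero row of $B=M_{h_{b+1}}\cdots M_{h_k}$ to lie in $T=\{l:P_{ll}=1\}$, and you call this ``the genuine obstacle.'' But the appeal to Lemma~\ref{deg=e} does not close it: that lemma compares the nonzero rows of a product of total degree $1_G$ with those of the same product with its \emph{first} factor removed. Here neither $AB$ nor $APB$ has degree $1_G$, and $P$ sits in the middle, so the lemma gives no control over which columns of $A$ meet which rows of $B$. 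The ``reduced'' hypothesis only tells you that \emph{some} column of $A$ lies in $T$ (since $AP\neq0$) and \emph{some} row of $B$ does (since $PB\neq0$); it does not force the particular $l$ you need into $T$. Concretely, with the grading on $M_4(F)$ induced by $(0,1,2,4)$ and $(h_1,\dots,h_5)=(1,2,-2,1,2)$, one has $M_{h_1}\cdots M_{h_5}=0$ while the excised product $M_{h_1}M_{h_4}M_{h_5}=e_{14}\neq0$; this particular $m$ fails your reducedness assumption (the subword $x_3x_4x_5$ is already an identity), but it shows that the excision mechanism does not follow from Lemma~\ref{deg=e} alone, and your text gives no further argument.

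The paper avoids this difficulty by a more modest surgery: instead of deleting the whole block, it \emph{merges} the single pair $x_{s-1}x_s$ into one variable $y$ of degree $h_{s-1}h_s$, reducing the length by exactly~$1$. The point is then to show that the nonzero rows of $M_{h_{s-1}}M_{h_s}\cdots M_{h_t}$ and of $M_{h_{s-1}h_s}M_{h_{s+1}}\cdots M_{h_t}$ coincide, and the nontrivial inclusion uses the cyclicity $h_s\cdots h_t=1_G$ to manufacture, from any walk realizing the merged product, an honest walk through $M_{h_{s-1}}$ and $M_{h_s}$ separately. This single-step merge sidesteps entirely the question of whether \emph{all} live endpoints survive the block, and the induction on $k$ does the rest.
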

	
	\begin{proof}
	    {From Proposition \ref{distinct-almost} we may consider $R$ an elementary grading induced by the tuple $(g_1,\dots,g_n)$ of pairwise distinct elements of $G$.}
		Suppose $k>n$. If $x_{1}\cdots x_{n}$ is a graded monomial identity for $R$ we are done. Assume that $x_{1}\cdots x_{n}$ is not a graded identity for $R$. In this case there are indexes $i_1,j_1,\dots,i_n,j_n$ such that $\deg e_{i_r,j_r}=h_r$ for all $r\in\{1,\dots, n\}$ and $e_{i_1,j_1}\cdots e_{i_n,j_n}\neq0$. Of course, $j_r=i_{r+1}$, for all $r<n$. Defining $i_{n+1}:=j_n$, since $i_r\in\{1,\dots,n\}$, for all $r$, at least two among the indexes $i_1,\dots,i_{n+1}$ are equal. Let $i_s$ and $i_{t+1}=j_t$ be such indexes. Then $\deg x_{i_s}\cdots x_{i_t}=1_G$, i.e., $x_{1}\cdots x_{k}$ has a submonomial $m'=x_{i_s}\cdots x_{i_t}$ of degree $1_G$.
		
		Suppose first $s=1$, i.e., $m'$ is at the beginning of the monomial $m$. Then Lemma \ref{deg=e} shows that $m$ is a consequence of $x_{2}\cdots x_{k}$.
		
		Suppose now $s>1$. We claim the monomial \[x_{1},\dots,x_{s-2}yx_{s+1}\dots x_{k},\]  is a graded monomial identity for $R$, where $\deg x_i=h_i\in G$, for all $i$ and $\deg y=h_{s-1}h_{s}$. For this, it is enough to show that the non-zero lines of $M_{h_{s-1}}M_{h_s}\cdots M_{h_t}$ and $M_{h_{s-1}h_s}M_{h_{s+1}}\cdots M_{h_t}$ are the same. To prove this claim notice that any non-zero line of the former product is a non-zero line of the latter. 
		
		Now let $i$ be a non-zero line of $M_{h_{s-1}h_s}M_{h_{s+1}}\cdots M_{h_t}$. As before, there are matrices $e_{i_r,j_r}$ for $r\in \{s+1,\dots,t\}$ such that $\deg e_{i_r,j_r}=h_r$ and $e_{i,j}e_{i_{s+1},j_{s+1}}\cdots e_{i_{t},j_{t}}\neq 0$, where $\deg e_{i,j}=h_{s-1}h_s$. Since $h_s\cdots h_t=1_G$, we have $\deg e_{j_t,i_{s+1}}=h_s$. 
		
		Comparing degrees, one gets $\deg e_{i,j_t}=h_{s-1}$. Hence, \[e_{i,j_t}e_{j_t,i_{s+1}}e_{i_{s+1},j_{s+1}}\cdots e_{i_{t},j_{t}}\neq 0,\] which means $i$ is a non-zero line of $M_{h_{s-1}}M_{h_s}M_{h_{s+1}}\cdots M_{h_t}$.
		
		In order to complete the proof we have to argue by induction on $k\geq n+1$. If $k=n+1$ the discussion above shows $m$ is a consequence of a graded monomial of degree $n$ and we are done. Suppose the result is true for $k-1\geq n+1$ and let us prove it for $k\geq n+2$. As above, $m$ is a consequence of a graded identity of degree strictly less than $k$ that is, by induction, a consequence of a graded monomial of degree less than or equal to $n$ and now the proof is complete.
	\end{proof}

We remark that the proof of the above result uses the hypothesis that $R$ is a grading on $M_n(F)$, hence for an arbitrary subalgebra of $M_n(F)$ generated by elementary matrices the conjecture in \cite{DM} is still open.

{The next example shows that the bound $f(n)=n$ is sharp.} 

\begin{example}\label{exam}
 Let us consider $R=M_n(F)$ with the $\mathbb{Z}_{n+1}$-grading given by $(\overline 0,\overline 1,\dots, \overline{n-1})$. In this case, $R_{\overline 1}^{n}=0$ provides a graded monomial identity of length $n$ which does not follow from a graded monomial identity of lower degree. In fact, $R$ does not satisfy any monomial identity of degree less than $n$, indeed notice that if $g\neq 1_G$ then the component of $R_g$ is $n-1$-dimensional, so if we consider the graded monomial $x_{g_1}\dots x_{g_k}$, we get the linear span of its image (considered as a function of $R^k$ to $R$) is a vector subspace of $R_{g_1\cdots g_k}$ of dimension at least $n-k$.
\end{example}

In the next section we consider almost non-degenerate gradings on matrix algebras.
       
\section{Almost non-degenerate gradings}\label{4}
	
Let $G$ be a group. In this section we consider the problem of determining the $n$-tuples of elements of $G$ inducing almost non-degenerate elementary gradings on $M_n(F)$. We recall that almost non-degenerate gradings are gradings which satisfy no non-trivial graded monomial identities. 

Throughout this section $R$ will denote the matrix algebra $M_n(F)$ over an algebraically closed field of characteristic zero. Moreover we will use the word \textit{length} instead of \textit{degree}  of a monomial.
	
\
	
In the next paragraphs we would like to show why it is sufficient to consider the hypothesis of the neutral component being commutative, starting with we the following characterization result.

Due to the fact the ground field is algebraically closed, then $D_{1_G}=F$ is commutative and the neutral component of the algebra $A_0$ above is commutative which makes the hypothesis of the neutral component being commutative consistent at the light of the characterization of the graded algebras given by Bahturin and Zaicev in \cite{BZ1}.

Now we start the study of the almost non-degenerate gradings on $M_n(F)$.

	By Lemma \ref{distinct-almost}, we can restrict our attention to elementary gradings on $M_n(F)$ generated by $n$-tuples of pairwise distinct elements.

    \begin{remark}\label{isom}
        The elementary gradings on $M_n(F)$ induced by the tuples $(g_1,\dots, g_n)$ and $(h_1,\dots, h_n)$ in $G^{n}$ are isomorphic if and only if there exists a permutation $\pi \in S_n$ and $g\in G$ such that $h_i=g_{\pi(i)}g$, for $i=1,\dots, n$ (see \cite[Corollary 2.12]{EK}).
    \end{remark}
    
	As we have already mentioned, a direct consequence of the main result of \cite{V} is the following.
	
	\begin{proposition}\label{almnd}
	   The canonical $\mathbb{Z}$-grading on $M_n(F)$ is an almost non-degenerate grading.
	\end{proposition}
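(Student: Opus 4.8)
The plan is to show that the canonical $\mathbb{Z}$-grading on $M_n(F)$, induced by the tuple $(0,1,\dots,n-1)$, satisfies no non-trivial multilinear graded monomial identity. By Remark \ref{multrem} it suffices to consider multilinear monomials. First I would recall the structure of this grading: the elementary matrix $e_{pq}$ has degree $q-p$, so the support is $\{-(n-1),\dots,-1,0,1,\dots,n-1\}$, and a homogeneous indeterminate of degree $g$ is \emph{admissible} (can be substituted nontrivially) precisely when $|g|\leq n-1$. A monomial $m=x_1\cdots x_k$ with $\deg x_i=g_i$ is a trivial identity exactly when some $g_i$ lies outside this support. So the task reduces to the following: given a multilinear monomial $m=x_1\cdots x_k$ all of whose degrees $g_1,\dots,g_k$ satisfy $|g_i|\leq n-1$, I must produce a nonzero admissible substitution.

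The key step is an explicit substitution by elementary matrices. A nonzero evaluation $e_{i_1j_1}\cdots e_{i_kj_k}$ requires a "staircase" $j_r=i_{r+1}$ for all $r<k$, with $i_{r+1}-i_r=g_r$, so the evaluation is nonzero iff the partial-sum path $p_0=i_1$, $p_r=p_{r-1}+g_r$ stays within $\{1,\dots,n\}$ at every step, i.e. $1\le p_r\le n$ for all $r=0,\dots,k$. Thus I must show that for \emph{any} sequence of steps $g_1,\dots,g_k$ with $|g_r|\leq n-1$, there exists a starting point $p_0\in\{1,\dots,n\}$ from which the walk never leaves $\{1,\dots,n\}$. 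This is most cleanly handled by tracking the running extremes: let $s_r=g_1+\cdots+g_r$ be the partial sums (with $s_0=0$), let $M=\max_r s_r$ and $\mu=\min_r s_r$ over $r=0,\dots,k$. The walk starting at $p_0$ stays in range iff $1\le p_0+\mu$ and $p_0+M\le n$, i.e. iff $M-\mu\le n-1$ and we pick $p_0=1-\mu$. So everything comes down to bounding the oscillation $M-\mu$ of the partial sums.

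Here is where the main obstacle lies, and where I expect the degree restriction to be genuinely essential rather than cosmetic. It is \emph{not} true that an arbitrary sequence of steps with $|g_r|\le n-1$ has bounded oscillation --- a monotone sequence of $+1$'s of length $k>n-1$ gives $M-\mu=k>n-1$, so such a monomial really is an identity. The point I would make instead is the \emph{contrapositive}: I assume $m$ is a non-trivial identity (all $|g_i|\le n-1$) and I must derive a contradiction, but the correct reading is that I only need to establish the statement for monomials that are not already trivial identities \emph{and} not consequences of shorter ones. The clean route, avoiding this combinatorial difficulty entirely, is to invoke the already-proved machinery: by Theorem \ref{monomialdegree} every graded monomial identity is a consequence of one of length at most $n$, so it suffices to rule out non-trivial monomial identities of length $\le n$. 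For length $k\le n$ and all $|g_r|\le n-1$, the greedy choice $p_0=1-\mu$ works because a walk of only $k\le n$ steps starting inside $\{1,\dots,n\}$ can be kept inside: the detailed verification is a short induction showing that after normalizing $p_0=1-\mu$ one has $1\le p_r\le n$ for all $r\le k\le n$.

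I would therefore organize the proof as: (i) reduce to multilinear monomials of length at most $n$ via Remark \ref{multrem} and Theorem \ref{monomialdegree}; (ii) reduce non-triviality to the hypothesis $|g_i|\le n-1$ for all $i$, identifying the support; (iii) translate "nonzero evaluation" into the existence of a lattice walk inside $\{1,\dots,n\}$; (iv) exhibit the starting index $p_0=1-\mu$ and verify by induction on $r\le k\le n$ that the walk stays in range, yielding a nonzero admissible substitution. The inductive verification in step (iv) is the crux; I expect it to hinge precisely on the fact that a walk of length at most $n$ with the minimum partial sum placed at the bottom of the strip cannot overshoot the top, which is exactly where the width-$n$ strip and the length bound $n$ conspire, and this is also what makes the sharpness Example \ref{exam} plausible.
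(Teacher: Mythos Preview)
Your combinatorial translation in steps (ii)--(iii) is exactly right, and the partial-sum formulation with $M=\max_r s_r$, $\mu=\min_r s_r$ is the natural one. The gap is in step (iv), and it stems from a misreading of what ``non-trivial'' means. You are treating non-triviality as the condition $|g_i|\le n-1$ for each variable individually; but by the definition preceding the proposition, a monomial lies in $I_0$ whenever \emph{any contiguous subword} has degree outside the support, since one obtains it from a trivial generator by the substitution $y\mapsto x_i\cdots x_j$. With only the weak hypothesis $|g_i|\le n-1$, the bound $M-\mu\le n-1$ simply fails: for $n=3$, $k=2$, $g_1=g_2=2$ one has $M-\mu=4$, and the monomial $x_1x_2$ (degrees $2,2$) is a genuine identity of the canonical $\mathbb{Z}$-grading on $M_3(F)$ with both variables in the support. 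More generally, $k$ copies of the step $n-1$ already give $M-\mu=k(n-1)$, so no ``length $\le n$ plus step size $\le n-1$'' argument can work; the appeal to Theorem~\ref{monomialdegree} does not rescue the induction you sketch.

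The fix is short and in fact makes your detour through Theorem~\ref{monomialdegree} unnecessary. If $m$ is not a trivial identity then \emph{every} contiguous subword $x_i\cdots x_j$ has degree in the support, i.e.\ $|s_j-s_i|\le n-1$ for all $0\le i\le j\le k$; in particular $M-\mu\le n-1$. Then $p_0=1-\mu$ lands in $\{1,\dots,n\}$ and $p_r=p_0+s_r\in\{1,\dots,1+(M-\mu)\}\subseteq\{1,\dots,n\}$ for every $r$, giving a nonzero evaluation by elementary matrices. Thus $m$ is not an identity at all. This is a clean self-contained proof. The paper, by contrast, does not argue directly: it simply invokes Vasilovsky's explicit basis for the $\mathbb{Z}$-graded identities of $M_n(F)$ from~\cite{V}, which contains no non-trivial monomials.
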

	
		In this section we determine sufficient conditions so that a $G$-grading on $M_n(F)$, where $G$ is a linearly ordered abelian group, turns out to be almost non-degenerate. We recall a \textit{linearly ordered group} is a group equipped with a total order that is translation-invariant. In particular, linearly ordered groups are always torsion-free. Since we are working with abelian groups, we will use additive notation for the group $G$. In particular, the neutral element will be denoted by $0$.
	
   The proof of the next lemma is easy and will be omitted.
   
        \begin{lemma}\label{campinense}
            Let $G$ be a linearly ordered abelian group, let $n$ be a positive integer and let $0=g_1<g_2<\dots<g_n\in G$ such that $g_j-g_i\in \{g_1,\dots, g_n\}$, for each $i$ and $j\in \{1,\dots, n\}$ with $i<j$. Then for all $k\in \{1,\dots, n\}$ we have $g_k=(k-1)g_2$.
        \end{lemma}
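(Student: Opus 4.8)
The plan is to prove the statement by induction on $k$, showing that the hypothesis forces the ordered set $\{g_1,\dots,g_n\}$ to be the arithmetic progression $0,\,g_2,\,2g_2,\dots,(n-1)g_2$. The base cases are immediate: $g_1=0=0\cdot g_2$ by assumption, and $g_2=1\cdot g_2$ trivially. Before starting the induction I would record the elementary fact that, since $G$ is linearly ordered (hence torsion-free) with translation-invariant order, for the fixed element $g_2>0$ and any integer $j$ one has $jg_2>0$ if and only if $j>0$. This is exactly what will let me compare indices by comparing the corresponding group elements.

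For the inductive step, assume $g_i=(i-1)g_2$ for all $i\le k$, with $k\ge 2$, and suppose $k+1\le n$. The key idea is \emph{not} to examine the consecutive difference $g_{k+1}-g_k$ (whose index in $\{g_1,\dots,g_n\}$ is awkward to control directly), but rather $g_{k+1}-g_2$. Since $k+1>2$ we have $g_{k+1}>g_2$, so this difference is positive; by the hypothesis applied to the pair $(i,j)=(2,k+1)$ it lies in $\{g_1,\dots,g_n\}$, and being positive it must equal $g_p$ for some $p\ge 2$. Because $g_2>0$ we have $g_p=g_{k+1}-g_2<g_{k+1}$, and the strict monotonicity $g_1<\dots<g_n$ then forces $p\le k$. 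Now the induction hypothesis applies to $g_p$, yielding
\[
g_{k+1}=g_2+g_p=g_2+(p-1)g_2=pg_2.
\]

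It then remains only to identify $p$. On the one hand $p\le k$ from the previous step. On the other hand $g_{k+1}>g_k=(k-1)g_2$, so $pg_2>(k-1)g_2$, i.e. $\bigl(p-(k-1)\bigr)g_2>0$; by the sign fact recorded at the outset this gives $p>k-1$, hence $p\ge k$. Therefore $p=k$ and $g_{k+1}=kg_2$, which closes the induction. I do not expect any genuine obstacle: the statement is elementary, and the only delicate point is to resist working with $g_{k+1}-g_k$ and instead subtract $g_2$, after which monotonicity together with the induction hypothesis pins down the index with no real computation.
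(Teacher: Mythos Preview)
Your proof is correct. The paper itself omits the proof of this lemma entirely, stating only that it ``is easy and will be omitted,'' so there is no argument to compare against; your induction on $k$ via the difference $g_{k+1}-g_2$ is a clean way to fill the gap.
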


    \begin{remark}\label{grad}
        Any grading by a torsion-free group on a matrix algebra is an elementary grading (see \cite[Corollary 2.22]{EK}). In particular a grading by a linearly ordered group $G$ on a matrix algebra is an elementary grading. 
    \end{remark}

    The next proposition gives a characterization of $G$-gradings on matrix algebras that are equivalent to the canonical $\mathbb{Z}$-grading, where $G$ is a linearly ordered abelian group.
        
    \begin{proposition}\label{equiv}
        Let $G$ be a linearly ordered abelian group and let \[\Gamma: M_n(F)=R=\oplus_{g\in G}R_g\] be a grading on $M_n(F)$ such that the neutral component is commutative. Then the following statements are equivalent:
            
        \begin{enumerate}[(a)]
                \item $\Gamma$ is equivalent to the canonical $\mathbb{Z}$-grading of $M_n(F)$.
                \item The support of the grading has exactly $2n-1$ elements.
                \item There exists $g>0$ in $G$ such that $\Gamma$ is the elementary grading induced by $(0,g,2g,\dots, (n-1)g)$.
                \item There exists $g\in G$ such that $\dim R_g=n-1$.
        \end{enumerate}
    \end{proposition}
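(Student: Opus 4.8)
The plan is to prove the cyclic chain of implications $(a)\Rightarrow(b)\Rightarrow(c)\Rightarrow(d)\Rightarrow(a)$, exploiting the order structure of $G$ (which by Remark \ref{grad} forces the grading to be elementary) and Lemma \ref{campinense} to pin down the arithmetic-progression structure.

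For $(a)\Rightarrow(b)$: an equivalence carries homogeneous components bijectively onto homogeneous components, so it preserves the cardinality of the support. The canonical $\mathbb{Z}$-grading on $M_n(F)$ has support $\{-(n-1),\dots,-1,0,1,\dots,n-1\}$, which has exactly $2n-1$ elements; hence $\spo(\Gamma)$ has $2n-1$ elements as well. For $(b)\Rightarrow(c)$: since the neutral component is commutative, the grading is elementary and induced by a tuple of pairwise distinct elements, which after translating by a group element (using Remark \ref{isom}) and reordering we may take to be $0=g_1<g_2<\dots<g_n$. The support then consists of all differences $g_j-g_i$. First I would observe that the $2n-1$ differences $g_j-g_1=g_j$ and $g_1-g_j=-g_j$ together with $0$ are already $2n-1$ distinct elements; since the support has exactly $2n-1$ elements, \emph{every} difference $g_j-g_i$ must coincide with one of the $g_k$ or its negative. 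Because $g_i<g_j$ gives $g_j-g_i>0$, each such positive difference lies in $\{g_1,\dots,g_n\}$, so the hypothesis of Lemma \ref{campinense} is satisfied and we conclude $g_k=(k-1)g$ with $g:=g_2>0$, which is exactly $(c)$.

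The implication $(c)\Rightarrow(d)$ is the direct computation: for the tuple $(0,g,2g,\dots,(n-1)g)$ the component $R_g$ is spanned by $\{e_{i,i+1}\mid 1\le i\le n-1\}$ (those are exactly the pairs with $(i{-}1)g-(j{-}1)g=-g$, or symmetrically $+g$), giving $\dim R_g=n-1$. Finally, for $(d)\Rightarrow(a)$: a nonzero homogeneous component $R_g$ of an elementary grading has dimension at most $n-1$ (a superdiagonal can carry at most $n-1$ elementary matrices, and $g\neq 0$ since the neutral component is commutative hence one-dimensional on each diagonal cell), and $\dim R_g=n-1$ forces the $n-1$ matrices $e_{i,\hat{g}(i)}$ to realize a single bijection linking the indices $1,\dots,n$ into one chain $i_1\to i_2\to\cdots\to i_n$; after relabeling via a permutation this chain is $1\to 2\to\cdots\to n$, so $g_{k}-g_{k-1}$ is constant and the grading is the elementary grading induced by an arithmetic progression, which is equivalent to the canonical $\mathbb{Z}$-grading by the map $kg\mapsto k$ on the support.

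The main obstacle I expect is the step $(b)\Rightarrow(c)$, specifically justifying rigorously that the $2n-1$ elements $\{0,\pm g_2,\dots,\pm g_n\}$ are forced to be distinct and to exhaust the support. Distinctness of the $g_j$ follows from commutativity of the neutral component, but one must also check no cancellation such as $g_j=-g_i$ can occur for $i,j>1$; this is where the linear order is essential, since $g_j>0$ and $-g_i<0$ cannot be equal, so all of $g_2,\dots,g_n$ are positive and their negatives are negative, keeping the two halves disjoint and delivering the $2n-1$ count tightly. Once this counting is nailed down, Lemma \ref{campinense} does the remaining work mechanically.
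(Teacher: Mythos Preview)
Your proposal is correct and follows essentially the same route as the paper: both reduce to an elementary grading by an ordered tuple $0=g_1<\cdots<g_n$, use the count $|\{0,\pm g_2,\dots,\pm g_n\}|=2n-1$ together with Lemma~\ref{campinense} for $(b)\Rightarrow(c)$, and handle $(c)\Rightarrow(d)$ by exhibiting the basis $e_{12},\dots,e_{n-1,n}$. The only cosmetic difference is in closing the cycle: the paper proves $(d)\Rightarrow(c)$ directly from the order (if $\dim R_g=n-1$ with $g>0$, the $n-1$ row indices must be $1,\dots,n-1$, and monotonicity forces the column map to be $t\mapsto t+1$), whereas you phrase the same step as a ``single chain'' argument relying on torsion-freeness to exclude cycles in the functional graph of $\hat g$; both yield the arithmetic progression and hence $(a)$.
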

       
        \begin{proof}
            By Remark \ref{grad} we get $\Gamma$ is an elementary grading. The neutral component is commutative, hence $\Gamma$ is induced by an $n$-tuple of pairwise distinct elements of $G$. As a consequence of Remark \ref{isom} we may assume, without loss of generality, that $\Gamma$ is an elementary grading induced by an $n$-tuple $(g_1,\dots, g_n)\in G^{n}$ such that $0=g_1<g_2<\cdots<g_n$.
            Note that $\spo(\Gamma)$ has at least $2n-1$ elements, indeed the $2n-1$ elements $e_{11},e_{12},\dots,e_{1n},e_{21},\dots,e_{n1}$ have pairwise distinct degrees.
            
            \vspace{0,3cm}
            \noindent
            $(a)\Rightarrow (b)$ It is immediate since the cardinality of the support is invariant under equivalence of gradings. 
            
            \vspace{0,3cm}
            \noindent
            $(b)\Rightarrow (c)$ Now assume that $\spo( \Gamma)$ has $2n-1$ elements, then  \[\spo(\Gamma)=\{0,g_2,\dots,g_n,-g_2,\dots,-g_n\}.\] In this case, for each $i<j$, $g_j-g_i$ is one among $g_2,\dots,g_n$. By Lemma \ref{campinense}, for each $k$, $g_k=(k-1)g_2$. 
            
            \vspace{0,3cm}
            \noindent
            $(c)\Rightarrow (d)$
            Notice that the set $e_{12}, e_{23},\dots , e_{n-1,n}$ is a linear basis of the subspace $R_g$, hence $\dim R_g=n-1$.
            
            \vspace{0,3cm}
            \noindent
            $(d)\Rightarrow (c)$ Up to an isomorphism of gradings, by Remark \ref{isom}, we may assume the gradings is induced by the $n$-tuple $(g_1,\dots, g_n)$ with $0=g_1<g_2<\cdots < g_n$. Let $g\neq 0$ be an element of $G$ such that $\dim R_g=n-1$. Since $\dim R_g=\dim R_{-g}$, we may assume $g>0$. If $\deg e_{ij}>0$, then $i<n$, therefore there exist $i_1,\dots, i_{n-1}$ such that the elementary matrices with degree $g$ are $e_{1i_1},\dots, e_{n-1,i_{n-1}}$. For $1\leq k<l\leq n-1$ the equality $g_k-g_{i_k}=g_l-g_{i_l}$ implies that $g_{i_k}<g_{i_l}$. Therefore $1<i_1<\cdots<i_{n-1}\leq n$. Thus $i_{t}=t+1$ for $t=1,\dots, n-1$. Hence $g=g_2-g_1=g_3-g_2=\dots=g_n-g_{n-1}$ and for each $k\in\{1,\dots,n\}$, $g_k=(k-1)g_2$. 
            
            \vspace{0,3cm}
            \noindent
            $(c) \Rightarrow (a)$ It is immediate that the map induced by $1\in \mathbb{Z} \mapsto g\in G$ gives an equivalence between the canonical $\mathbb{Z}$-grading of $R$ and the grading $\Gamma$.
        \end{proof}
        
        Notice the hypothesis of $G$ being linearly ordered cannot be removed in Proposition \ref{equiv} as we show in the next example.
        
        \begin{remark}\label{Z2n-1}
               Let $R$ be endowed with the $\mathbb{Z}_{2n-1}$-grading induced by the $n$-tuple $(\overline{0}, \overline{1}, \dots, \overline{n-1})$. We have  $R_{\overline{n-1}}=\langle E_{1n}\rangle$. In particular, $R_{\overline{n-1}}^{2}=0$,  and this yields a non-trivial monomial identity, since $\overline{2(n-1)}=-\overline{1}$ and $R_{-\overline{1}}\neq 0$. The grading $R$ is equivalent to the canonical $\mathbb{Z}$-grading on $M_n(F)$, however it satisfies a non-trivial monomial identity. 
        \end{remark}
         
          The next result shows some characterizations of the canonical $\Z$-grading on $R$. This will be helpful in order to determine the almost non-degenerate gradings on $R$. 
        
        \begin{theorem}\label{almnond}
            If $G$ is a linearly ordered abelian group and $\Gamma$ is a $G$-grading on $R$ such that $|\spo(\Gamma)|=2n-1$, then $\Gamma$ is an almost non-degenerate grading.
        \end{theorem}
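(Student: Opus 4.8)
The plan is to recognize $\Gamma$, up to equivalence, as the canonical $\mathbb{Z}$-grading and then transport the almost non-degeneracy of the latter (Proposition \ref{almnd}) across the equivalence. First I would record the structural facts that put Proposition \ref{equiv} at my disposal: by Remark \ref{grad} every grading by a linearly ordered abelian group on $M_n(F)$ is elementary, and, working under the section's standing assumption, $\Gamma$ is induced by an $n$-tuple of pairwise distinct elements of $G$, so its neutral component is commutative. With this in force and $|\spo(\Gamma)|=2n-1$, the implication $(b)\Rightarrow(a)$ of Proposition \ref{equiv} shows that $\Gamma$ is equivalent to the canonical $\mathbb{Z}$-grading of $M_n(F)$ (concretely, by $(c)$, $\Gamma$ is the elementary grading induced by $(0,g,2g,\dots,(n-1)g)$ for some $g>0$, relabelled from the canonical one via $1\mapsto g$).

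The heart of the argument is then a transfer principle: \emph{almost non-degeneracy is invariant under equivalence of gradings}. To establish it, let $\varphi\colon A\to B$ be an equivalence with associated bijection $\alpha\colon\spo(\Gamma_1)\to\spo(\Gamma_2)$, so that $\varphi$ is an algebra isomorphism with $\varphi(A_g)=B_{\alpha(g)}$. A multilinear monomial $x_1\cdots x_m$ with $\deg x_i=a_i$ is a \emph{non-trivial} graded monomial identity exactly when every $a_i$ lies in the support and the product $A_{a_1}\cdots A_{a_m}$ of the corresponding homogeneous components vanishes. Since $\varphi$ is an isomorphism, $A_{a_1}\cdots A_{a_m}=0$ if and only if $B_{\alpha(a_1)}\cdots B_{\alpha(a_m)}=0$, while $a_i\in\spo(\Gamma_1)$ if and only if $\alpha(a_i)\in\spo(\Gamma_2)$; hence the non-trivial multilinear monomial identities of $\Gamma_1$ and of $\Gamma_2$ are in bijective correspondence. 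Applying this with $\Gamma_1$ the canonical $\mathbb{Z}$-grading, which by Proposition \ref{almnd} admits no non-trivial monomial identity, and $\Gamma_2=\Gamma$, I conclude that $\Gamma$ admits no non-trivial multilinear monomial identity, i.e. $\Gamma$ is almost non-degenerate.

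I expect the only genuinely delicate point to be this transfer principle, and within it the bookkeeping that a monomial is non-trivial \emph{precisely} when all of its variable degrees lie in the support, so that triviality, exactly like the vanishing of a product of components, is preserved by the support bijection $\alpha$. A second point I would make explicit is that we work under the standing assumption that the inducing tuple has pairwise distinct entries (equivalently, that the neutral component is commutative): this is precisely the hypothesis under which Proposition \ref{equiv}, and hence the reduction to the canonical $\mathbb{Z}$-grading, applies, and it is what is being used here via Proposition \ref{distinct-almost}. Once these two points are in place the conclusion is immediate.
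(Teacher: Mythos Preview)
Your transfer principle is false, and Remark~\ref{Z2n-1} in the paper is an explicit counterexample: the $\mathbb{Z}_{2n-1}$-grading on $M_n(F)$ induced by $(\overline 0,\overline 1,\dots,\overline{n-1})$ is equivalent to the canonical $\mathbb{Z}$-grading yet satisfies the non-trivial monomial identity $x_1x_2$ with $\deg x_1=\deg x_2=\overline{n-1}$. The error lies in your characterization of triviality. A multilinear monomial $x_1\cdots x_m$ with $\deg x_i=a_i$ lies in $I_0$ not only when some $a_i\notin\spo(R)$, but more generally whenever some contiguous partial product $a_ia_{i+1}\cdots a_j$ falls outside $\spo(R)$: since $I_0$ is the $T_G$-ideal generated by the degree-$g$ variables for $g\notin\spo(R)$, it contains all of $\F_g$ for each such $g$ (substitute a monomial of degree $g$ for the generating variable), and hence contains any monomial with a homogeneous factor of such a degree. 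Whether a product $a_i\cdots a_j$ lies in the support depends on the group law, which a mere bijection of supports need not respect. Concretely, in the canonical $\mathbb{Z}$-grading the monomial $x_1x_2$ with $\deg x_1=\deg x_2=n-1$ is a \emph{trivial} identity because its total degree $2(n-1)$ lies outside the support; under the equivalence with the $\mathbb{Z}_{2n-1}$-grading the corresponding identity has total degree $\overline{2(n-1)}=-\overline{1}\in\spo(R)$ and so becomes non-trivial.

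The paper avoids this by using weak isomorphism rather than equivalence. It invokes Proposition~\ref{equiv}\,$(c)$, so that $\Gamma$ is the elementary grading induced by $(0,h,\dots,(n-1)h)$; the map $z\mapsto zh$ is then a group isomorphism $\mathbb{Z}\to H:=\langle h\rangle$ carrying the canonical $\mathbb{Z}$-grading to the $H$-grading on $R$. Almost non-degeneracy is genuinely invariant under weak isomorphism, because a group isomorphism transports both supports and all partial products. One then passes from the $H$-grading to the ambient $G$-grading: since $\spo(\Gamma)\subseteq H$, any monomial for the $G$-grading with a variable of degree outside $H$ is automatically trivial, while for monomials with all degrees in $H$ the notions of identity and of triviality coincide for the $H$- and $G$-gradings. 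Your own parenthetical remark (``relabelled from the canonical one via $1\mapsto g$'') is exactly this weak-isomorphism step; had you built the argument on it rather than on mere equivalence, the proof would go through.
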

        \begin{proof}
        Proposition \ref{equiv} implies that there exists $h>0$ in $G$ such that $\Gamma$ is the elementary grading on $M_n(F)$ induced by $(0,h,2h,\dots, (n-1)h)$. Therefore $\spo(\Gamma)\subseteq H$, where $H$ is the subgroup of $G$ generated by $h$. Let $A$ denote $M_n(F)$ with the canonical $\mathbb{Z}$-grading. The map $\varphi: z\mapsto zh$ is an isomorphism from $\mathbb{Z}$ to $H$ such that $A_z=R_{\varphi(z)}$ for every $z\in\mathbb{Z}$. This implies that the gradings on $R$ and $A$, viewed as gradings by the groups $\mathbb{Z}$ and $H$ respectively, are weakly isomorphic. Proposition \ref{almnd} implies that the $H$-grading on $R$ is also almost non-degenerate. This gives that $\Gamma$ is an almost non-degenerate grading.
        \end{proof}
        
        A natural question now arises. Does the converse of Proposition \ref{almnond} hold? More precisely, if $G$ is a linearly ordered abelian group and the $G$-grading $\Gamma$ on $M_n(F)$ is almost non-degenerate does it follow that $\Gamma$ is equivalent to the canonical $\mathbb{Z}$-grading on $M_n(F)$?
		
		The answer to the above question is negative and counterexamples exist only for $n\geq 4$, as we can see in what follows.
		
		\begin{proposition}\label{counterexample}
    	    Let $G$ be a linearly ordered group and $\Gamma$ be an almost non-degenerate elementary $G$-grading on $R$, $n\leq 3$, such that the neutral component is commutative. Then $\Gamma$ is equivalent to the canonical $\mathbb Z$-grading on $R$.
		\end{proposition}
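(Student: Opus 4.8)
The plan is to reduce $\Gamma$ to a normalized inducing tuple and then, in the only genuine case $n=3$, to manufacture an explicit non-trivial monomial identity of length $2$ whenever the tuple fails to be an arithmetic progression. First I would record the reductions. Since $G$ is linearly ordered it is torsion-free, so by Remark \ref{grad} the grading $\Gamma$ is elementary, and the commutativity of the neutral component forces $\Gamma$ to be induced by an $n$-tuple of pairwise distinct elements. Working additively as in the rest of the section and using Remark \ref{isom} to permute the entries and to translate by a fixed element of $G$, I may assume $\Gamma$ is induced by $(0,g_2,\dots,g_n)$ with $0<g_2<\cdots<g_n$. By Proposition \ref{equiv} it then suffices to show the tuple is an arithmetic progression, i.e. $g_k=(k-1)g_2$ for all $k$, since implication $(c)\Rightarrow(a)$ of that proposition gives the desired equivalence. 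For $n\le 2$ the normalized tuple is $(0)$ or $(0,g_2)$, which is already an arithmetic progression, so there is nothing to prove.

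The heart of the argument is the case $n=3$, which I would treat by contraposition. Assume $\Gamma$ is almost non-degenerate and suppose, for contradiction, that $g_3\neq 2g_2$, equivalently $g_3-g_2\neq g_2$. Consider the multilinear monomial $m=x_1x_2$ with $\deg x_1=g_3-g_2$ and $\deg x_2=g_2$. The reason for this choice is that $g_3\neq 2g_2$ forces both homogeneous components involved to be one-dimensional: among the matrix units $e_{12},e_{13},e_{23}$ of positive degree, the only one of degree $g_2$ is $e_{12}$ and the only one of degree $g_3-g_2$ is $e_{23}$ (any coincidence would give exactly $g_3=2g_2$). In the notation $M_h$ of Lemma \ref{deg=e} this says $M_{g_2}=e_{12}$ and $M_{g_3-g_2}=e_{23}$, whence $M_{g_3-g_2}M_{g_2}=e_{23}e_{12}=0$. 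Since a multilinear monomial is a graded identity of an elementary grading exactly when the associated product of the $M_h$ vanishes (the computation underlying Lemma \ref{deg=e} and the proof of Theorem \ref{monomialdegree}), $m$ is a graded monomial identity of $R$.

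Finally I would verify that $m$ is not trivial: the three degrees $\deg x_1=g_3-g_2$, $\deg x_2=g_2$ and $\deg m=(g_3-g_2)+g_2=g_3$ occur as degrees of the matrix units $e_{23}$, $e_{12}$, $e_{13}$, hence all lie in $\spo(\Gamma)$, so no subword of $m$ has degree outside the support and $m\notin I_0$. Thus $m$ is a non-trivial monomial identity, contradicting almost non-degeneracy; therefore $g_3=2g_2$, and Proposition \ref{equiv} finishes the proof. I expect the only delicate point to be this non-triviality check, namely choosing a single monomial that works uniformly for every $g_3\neq 2g_2$. The naive candidate $x_1x_2$ of degree $(g_2,g_2)$ is also an identity (as $e_{12}^2=0$) but fails precisely when $2g_2\notin\spo(\Gamma)$, for instance when $g_3=4g_2$, since there its total degree $2g_2$ lies outside the support and it becomes trivial; the choice $(g_3-g_2,g_2)$ above instead always has total degree $g_3\in\spo(\Gamma)$ and so survives in every case. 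A secondary point to keep in mind is that all the computations, and the invocation of Proposition \ref{equiv}, are carried out in the additive, linearly ordered abelian setting fixed throughout this section.
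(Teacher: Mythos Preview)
Your proof is correct and follows essentially the same approach as the paper: normalize the inducing tuple to $(0,g_2,g_3)$ with $0<g_2<g_3$, and in the case $g_3\neq 2g_2$ exhibit the non-trivial length-$2$ identity $x_1x_2$ with $\deg x_1=g_3-g_2$, $\deg x_2=g_2$, using that $R_{g_3-g_2}=\langle e_{23}\rangle$ and $R_{g_2}=\langle e_{12}\rangle$. Your non-triviality check and the closing remark about why the alternative pair $(g_2,g_2)$ need not work are more explicit than the paper, but the argument is the same.
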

		
	    \begin{proof}
	        Let us consider $n=2$, then for any non-trivial $G$-grading there exists $g\in G$ such that $$R_0=\left(\begin{array}{cc}
	            F & 0 \\
	            0 & F
	        \end{array}\right), R_g=\left(\begin{array}{cc}
	            0 & F \\
	            0 & 0
	        \end{array}\right),  R_{-g}=\left(\begin{array}{cc}
	            0 & 0 \\
	            F & 0
	        \end{array}\right).$$ It is clear that this grading is equivalent to the canonical $\mathbb{Z}$-grading on $R$. If $n=3$, by Remark \ref{grad} and Remark \ref{isom} we may assume that $\Gamma$ is the elementary grading induced by the triple $\overline{g}=(0,g_1,g_2)$ of elements of $G$, where $0<g_1<g_2$. In this case the positive elements in $\spo(\Gamma)$ are $\{g_1, g_2, g_2-g_1\}$.
	        If $g_2\neq 2g_1$ we have $$R_{g_1}=\left(\begin{array}{ccc}
	            0 & F & 0\\
	            0 & 0 & 0\\
	            0 & 0 & 0
	        \end{array}\right), R_{g_2}=\left(\begin{array}{ccc}
	            0 & 0 & F\\
	            0 & 0 & 0\\
	            0 & 0 & 0
	        \end{array}\right), R_{g_2-g_1}=\left(\begin{array}{ccc}
	            0 & 0 & 0\\
	            0 & 0 & F\\
	            0 & 0 & 0
	        \end{array}\right).$$ As a consequence, it satisfies a non-trivial monomial identity of length 2: $x_1x_2=0$, if $\deg x_1=g_2-g_1$ and $\deg x_2=g_1$. This contradicts the fact that $\Gamma$ is almost non-generate. Therefore we get $g_2=2g_1$ which leads us to state the grading on $R$ is equivalent to the canonical $\mathbb{Z}$-grading on $R$.
	    \end{proof}
	    
	    We now introduce a grading on $R$ which will play an important role in the characterization of its almost non-degenerate gradings.
	    
    Let $n$ be a positive integer and denote by $e_i$ the element of $\mathbb{Z}^{\left \lfloor{\frac{n}{2}}\right \rfloor}$ whose $i$-th entry is equal to $1$ and the remaining entries are $0$. We consider the $(n-1)$-tuple $\overline{d}=(d_1,\dots, d_{n-1})$ where $d_i=e_i$ for $1\leq i \leq \left \lfloor{\frac{n}{2}}\right \rfloor$ and $d_i=d_{n-i}$ for $ \left \lfloor{\frac{n}{2}}\right \rfloor<i\leq n-1$. Now let $\overline{g}=(g_1,\dots,g_n)$ be the $n$-tuple of elements of $\mathbb{Z}^{\left \lfloor{\frac{n}{2}}\right \rfloor}$ such that $g_1=0$ and $g_{i+1}-g_i=d_i$ for $i=1,\dots, n$. Let  
\begin{align}\label{grading}
    R=\oplus_{g\in \mathbb{Z}^{\left \lfloor{\frac{n}{2}}\right \rfloor}}R_g
\end{align}
be the elementary $\mathbb{Z}^{\left \lfloor{\frac{n}{2}}\right \rfloor}$-grading on $R$ induced by $\overline{g}$. Then we get the following.

\begin{proposition}\label{coars}
	Let $G$ be an abelian group without elements of order $2$. Let us consider $R$ with an elementary grading induced by a tuple $(g_1,\dots, g_n)$ of pairwise distinct elements of $G$. If there exists $g\in G$ such that $\dim R_g=1$ and $R$ satisfies no non-trivial multilinear identity of length $2$, then this grading is isomorphic to a coarsening of the $\mathbb{Z}^{\left \lfloor{\frac{n}{2}}\right \rfloor}$-grading in (\ref{grading}).
\end{proposition}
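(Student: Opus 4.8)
The plan is to recast the conclusion as a combinatorial symmetry of the multiset $S=\{g_1,\dots,g_n\}$ and then extract that symmetry from the two hypotheses. Throughout I write $G$ additively, so that $\deg e_{pq}=g_q-g_p$, and I recall from Remark \ref{isom} that permuting or translating the inducing tuple leaves the grading unchanged up to isomorphism. The tuple $g^{(0)}=(g_1^{(0)},\dots,g_n^{(0)})$ defining (\ref{grading}) has consecutive differences $d_i=g_{i+1}^{(0)}-g_i^{(0)}$ obeying the palindromic relation $d_i=d_{n-i}$, and a coarsening of (\ref{grading}) is obtained by applying a homomorphism $\phi\colon\Z^{\lfloor n/2\rfloor}\to G$, producing the elementary grading induced by $(\phi(g_1^{(0)}),\dots,\phi(g_n^{(0)}))$. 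Hence I would first show that the conclusion is equivalent to the existence of an ordering $a_1,\dots,a_n$ of $g_1,\dots,g_n$ with palindromic consecutive differences $\delta_i:=a_{i+1}-a_i=a_{n-i+1}-a_{n-i}=\delta_{n-i}$. Given such an ordering, setting $\phi(e_i)=\delta_i$ for $1\le i\le\lfloor n/2\rfloor$ forces $\phi(d_j)=\delta_j$ for every $j$ (for $j>\lfloor n/2\rfloor$ one has $d_j=e_{n-j}$, so $\phi(d_j)=\delta_{n-j}=\delta_j$), whence the $\phi$-coarsening of (\ref{grading}) is induced by $(0,\delta_1,\delta_1+\delta_2,\dots)=(a_1,\dots,a_n)-a_1$, a translate of a permutation of the given tuple, and is therefore isomorphic to $\Gamma$ by Remark \ref{isom}.

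The next reduction is elementary: a sequence $a_1,\dots,a_n$ has palindromic differences precisely when $a_i+a_{n+1-i}$ equals a constant $s$ independent of $i$, i.e.\ when the reflection $\rho(x)=s-x$ carries the ordering to its reversal. Thus it suffices to prove that $S=s-S$ for some $s\in G$, and then that such a $\rho$-invariant set can genuinely be ordered palindromically.

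The symmetry $S=s-S$ is where the hypotheses enter. Let $g$ be the degree with $\dim R_g=1$; since the diagonal gives $\dim R_0=n\ge 2$, we have $g\neq 0$, so $R_g=Fe_{ab}$ for a unique pair $(a,b)$ with $g_b-g_a=g$. The assumption that $R$ satisfies no non-trivial multilinear identity of length $2$ says exactly that $R_{h_1}R_{h_2}\neq 0$ for all $h_1,h_2\in\spo(R)$. Taking $h_1=g$ and using $R_g=Fe_{ab}$, the product $e_{ab}R_{h_2}$ is non-zero only when some $e_{bc}$ has degree $h_2$, so $h_2\in S-g_b$; as the reverse inclusion is automatic, $\spo(R)=S-g_b$. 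Taking $h_2=g$ gives symmetrically $\spo(R)=g_a-S$. Therefore $S-g_b=g_a-S$, that is $S=(g_a+g_b)-S$, which is the required symmetry with $s=g_a+g_b$.

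The final step, converting $S=s-S$ into an actual palindromic ordering, is where I expect the only genuine subtlety to lie, and it is precisely here that the no-$2$-torsion hypothesis is indispensable. The reflection $\rho$ decomposes $S$ into two-element orbits $\{x,s-x\}$ and fixed points satisfying $2x=s$; since $G$ has no element of order $2$, the equation $2x=s$ has at most one solution, so $\rho$ has at most one fixed point, and the parity identity $n=(\#\text{fixed})+2(\#\text{orbits})$ then forces exactly one fixed point when $n$ is odd and none when $n$ is even. Placing the fixed point, if any, in the central slot and the two-element orbits symmetrically about it produces an ordering with $a_i+a_{n+1-i}=s$; distinctness of the $g_i$, together with the impossibility of a second fixed point, guarantees that no orbit is forced into an illegal position. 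This yields the palindromic ordering, hence the homomorphism $\phi$ and the coarsening, completing the argument. (Note that without the no-$2$-torsion hypothesis $\rho$ could have several fixed points, none of which can be paired with one another, and no palindromic ordering need exist, so the hypothesis cannot be dropped.)
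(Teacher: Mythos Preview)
Your overall strategy coincides with the paper's: prove the symmetry $S=s-S$ with $s=g_a+g_b$, deduce that the involution $x\mapsto s-x$ has at most one fixed point from the absence of $2$-torsion, and then reorder the tuple palindromically to exhibit the grading as a coarsening of (\ref{grading}). The reduction to a palindromic ordering and the final orbit-pairing argument are correct and match the paper.

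There is, however, a genuine gap in the derivation of $S=s-S$. Your reformulation of the hypothesis is not right: the monomial $x_1x_2$ with $\deg x_i=h_i$ is a \emph{trivial} identity not only when $h_1\notin\spo(R)$ or $h_2\notin\spo(R)$, but also when $h_1+h_2\notin\spo(R)$ (it then lies in the $T_G$-ideal $I_0$). So the hypothesis only yields $R_{h_1}R_{h_2}\neq 0$ for $h_1,h_2\in\spo(R)$ \emph{with} $h_1+h_2\in\spo(R)$. As a result your intermediate claim $\spo(R)=S-g_b$ is actually false: the right-hand side has exactly $n$ elements, while $|\spo(R)|\ge 2n-1$ (the matrices $e_{11},e_{12},\dots,e_{1n},e_{21},\dots,e_{n1}$ have pairwise distinct degrees). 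The same objection applies to $\spo(R)=g_a-S$.

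The repair is short and is precisely what the paper does: instead of letting $h_2$ range over all of $\spo(R)$, take $h_2=g_a-g_t\in g_a-S$. Then $h_2=\deg e_{ta}\in\spo(R)$ and $g+h_2=g_b-g_t=\deg e_{tb}\in\spo(R)$, so the hypothesis legitimately gives $R_gR_{h_2}\neq 0$, hence $h_2\in S-g_b$. This proves $g_a-S\subseteq S-g_b$; both sets have $n$ elements, so equality holds and $S=(g_a+g_b)-S$ follows. With this correction the rest of your argument goes through unchanged.
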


\begin{proof}
	Let $R_g:=span\{e_{ij} \}$ be a one dimensional component of the grading and let $\overline{h}=(h_1,\dots, h_n)$, where $h_t=g_t-g_i$ for $t=1,\dots, n$. 
	
	We claim that for every $t\in\{1,\dots, n\}$ there exists an index $l(t)$ such that $e_{l(t)j}\in R_{h_t}$.  Otherwise there exists $t_0$ such that $e_{lj}\notin R_{h_{t_0}}$ for $l=1,\dots, n$ or, equivalently, $e_{jl}\notin R_{-h_{t_0}}$, for $l=1, \dots, n$. This implies that $m=x_1x_2$, with $\deg x_1=g$ and $\deg x_2=-h_{t_0}$, is a graded monomial identity for $R$. Since $\deg m=g-h_{t_0}=g_j-g_{t_0}\in\spo(R)$, then $m$ is a non-trivial monomial identity, that is a contradiction. Thus we obtain a map $t\mapsto l(t)$, where $l(t)$ is the index such that $h_j-h_{l(t)}=g_j-g_{l(t)}=h_t$. This map is injective and therefore a permutation in $S_n$. Hence we conclude that $h_j-h_t\in \{h_1,\dots, h_n\}$, for $t\in \{1,\dots, n\}$.
	Thus we have a permutation $\alpha\in S_n$ so that 
	\begin{equation}\label{alpha}
	h_j-h_t=h_{\alpha(t)},
	\end{equation}
	for $t=1,\dots,n$. Since the group $G$ is abelian we conclude that $\alpha^2=1$, hence $\alpha$ is the product of disjoint transpositions. Since $G$ has no elements of order $2$, the homomorphism $g\mapsto 2g$ is injective and $\alpha$ has at most one fixed point. We may consider, up to isomorphism, $R$ being graded by the tuple $(h_{\beta(1)},\dots, h_{\beta(n)})$, for some $\beta \in S_n$. The effect of this in (\ref{alpha}) is to change $\alpha$ for $\beta^{-1}\alpha \beta$. Indeed, let $h_t'=h_{\beta(t)}$, for $t=1,\dots, n$ and let us rewrite (\ref{alpha}) for the tuple $(h_1', \dots, h_n')$. Let $j'=\beta^{-1}(j)$, then 
	\[h'_{j'}-h'_{t}=h_j-h_{\beta(t)}=h_{\alpha\beta(t)}=h'_{\beta^{-1}\alpha\beta(t)}.\]
	Since $\alpha$ has at most one fixed point, we can choose, without loss of generality, $\alpha$ being of the following form
	\[\alpha=\left(\begin{array}{ccccc}
	1&2&\dots&n-1&n\\
	n&n-1&\dots&2&1
	\end{array}\right).\] In this case, (\ref{alpha}) implies that $h_t+h_{n+1-t}=h_j$ for $t=1,\dots, n$. Now let $d_t=h_{t+1}-h_t$ for $t=1,\dots, n-1$. As a consequence we have
	\[d_{n-t}-d_t=h_{n-t+1}-h_{n-t}-(h_{t+1}-h_t)=h_{n-t+1}+h_{t}-(h_{n-t}+h_{t+1})=0,\]
	and $d_t=d_{n-t}$, which means the grading is a coarsening of the grading in (\ref{grading}).
\end{proof}

\begin{remark}
The converse of the above proposition does not hold and the grading in Remark \ref{Z2n-1} provides a counter-example since it is isomorphic to a coarsening of the grading in (\ref{grading}), has a 1-dimensional component but satisfies a monomial identities of length 2, for $n>2$.
\end{remark}

Looking back at the hypothesis of Proposition \ref{coars}, it is natural to ask whether or not the fact that $R$ satisfies a non-trivial graded monomial identity, implies $R$ satisfies a non-trivial monomial identity of length 2. The answer to this question is negative and Example \ref{exam} furnishes a counterexample. In fact, the graded algebra in Example \ref{exam} satisfies a non-trivial monomial identity of length $n$ but does not satisfies any non-trivial monomial of length 2. 

\begin{remark}\label{coarse2}
Let us specialize Proposition \ref{coars} for $G$ being a linearly ordered abelian group and $R=M_n(F)$ with a grading induced by an $n$-tuple $(g_1,\dots, g_n)$ of pairwise distinct elements of $G$. Up to isomorphism of gradings, we may consider $0=g_1<g_2<\dots<g_n$. In particular, $R_{g_n}$ plays the role of the 1-dimensional component of $R$ and $R_{g_n}=span\{e_{1n}\}$. In the notation of the proof of Proposition \ref{coars}, $i=1$, $j=n$ and  since $g_n-g_1>g_n-g_2>\cdots>g_n-g_n$, and all of them lie in $\{g_1,\dots, g_t\}$, we have $g_n-g_t=g_{n-t+1}$. In this case, this grading is a coarsening of the grading in (\ref{grading}), defined by an  $n-1$-tuple $(d_1,\dots,d_{n-1})$, with $d_t>0$, for all $t$. More precisely $d_t=g_{t+1}-g_t$.

\end{remark}

Now we are ready to state our first main result of the section linking graded identities of $M_4(F)$ and almost non-degenerate gradings.

\begin{theorem}\label{n=4}
    Let $G$ be a linearly ordered abelian group. Let $R$ denote the algebra $M_4(F)$ with a $G$-grading such that the neutral component is commutative. Then the $T_G$-ideal $T_G(R)$ is generated as a $T_G$-ideal by the identities $(\ref{(3)})-(\ref{(5)})$ if and only if there exist $a,b>0$ in $G$, with $a\neq 2b$, $2a\neq b$, such that $R$ is isomorphic to the elementary grading on $M_4(F)$ induced by $(0,a,a+b,2a+b)$.
\end{theorem}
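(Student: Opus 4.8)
The plan is to reduce the statement to a classification of almost non-degenerate gradings and then carry out that classification. By \cite{BD} together with Theorem \ref{monomialdegree}, $T_G(R)$ is generated by $(\ref{(3)})$--$(\ref{(5)})$ and graded monomial identities of degree at most $4$; since the trivial monomial identities are precisely the consequences of $(\ref{(5)})$ (these generate $I_0$), the $T_G$-ideal $T_G(R)$ is generated by $(\ref{(3)})$--$(\ref{(5)})$ if and only if $R$ is almost non-degenerate. By Remark \ref{grad} the grading is elementary, and commutativity of the neutral component forces the defining $4$-tuple to have pairwise distinct entries; by Remark \ref{isom} we may assume it is $0=g_1<g_2<g_3<g_4$. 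So it suffices to prove that this grading is almost non-degenerate if and only if it is induced by $(0,a,a+b,2a+b)$ with $a,b>0$, $a\neq 2b$, $2a\neq b$.

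I would first settle the ``only if'' direction. Since $R_{g_4}=\spa\{e_{14}\}$ is one-dimensional and an almost non-degenerate grading has no non-trivial length-$2$ identity, Remark \ref{coarse2} applies and forces the consecutive differences to satisfy $d_1=d_3$; writing $a=d_1$, $b=d_2$ yields the tuple $(0,a,a+b,2a+b)$. The two conditions are then obtained by exhibiting explicit non-trivial identities when they fail: if $a=2b$ then $R_bR_{2b}=0$ while $3b\in\spo(R)$, and if $2a=b$ then $R_{2a}R_{2a}=0$ while $4a\in\spo(R)$. In either case one gets a non-trivial graded monomial identity of length $2$, so $R$ fails to be almost non-degenerate.

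The main step is the converse: if $a,b>0$, $a\neq 2b$ and $2a\neq b$, then $(0,a,a+b,2a+b)$ is almost non-degenerate. Here I would reformulate non-vanishing of a monomial as a combinatorial ``walk'' condition. Writing $P=\{0,a,a+b,2a+b\}$ for the set of positions, a multilinear monomial with homogeneous degrees $h_1,\dots,h_k$ is a non-identity if and only if its set of partial sums $\Sigma=\{0,\sigma_1,\dots,\sigma_k\}$, where $\sigma_t=h_1+\cdots+h_t$, can be translated inside $P$ (a realizing walk on the nodes $1,\dots,4$ then exists), and it is non-trivial if and only if all contiguous partial sums $\sigma_t-\sigma_s$ lie in $\spo(R)=P-P$, that is $\Sigma-\Sigma\subseteq P-P$. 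Thus it is enough to show that every finite set $\Sigma$ with $\Sigma-\Sigma\subseteq P-P$ is a translate of a subset of $P$. Translating so that $\min\Sigma=0$, every nonzero element of $\Sigma$ lies in the positive part $\{a,b,a+b,2a+b\}$ of $P-P$, whence $\Sigma\subseteq\{0,a,b,a+b,2a+b\}$. The decisive point is that the hypotheses $a\neq 2b$ and $2a\neq b$ (together with $a\neq b$, the case $a=b$ being the canonical grading, handled by Theorem \ref{almnond}) force $a-b\notin P-P$ and $2a\notin P-P$; consequently $\Sigma$ cannot contain both $a$ and $b$, nor both $b$ and $2a+b$. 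Hence either $b\notin\Sigma$, giving $\Sigma\subseteq P$, or $b\in\Sigma$, giving $\Sigma\subseteq\{0,b,a+b\}=\{a,a+b,2a+b\}-a$, once more a translate of a subset of $P$. This rules out all non-trivial monomial identities of every length.

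I expect the reduction in the first paragraph and the construction of the two length-$2$ identities to be routine, so the main obstacle is the sufficiency direction: one must confirm that the two excluded relations are the \emph{only} obstructions, over all monomial identities of arbitrary length. The translate-into-$P$ reformulation is what makes this feasible, since it collapses an a priori unbounded family of identities into the single membership computation $a-b\notin P-P$; the delicate bookkeeping is to treat the coincidence $a=b$ separately and to verify that the only position outside $P$ that $\Sigma$ can reach is $b$, which the two hypotheses then prevent from occurring together with $a$ or with $2a+b$.
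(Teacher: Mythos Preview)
Your reduction and the ``only if'' direction follow the paper, but your sufficiency argument is genuinely different. The paper uses Theorem~\ref{monomialdegree} to bound the length to at most $4$, then argues separately for lengths $4$, $3$, and $2$, the key ingredient being the computation that $\dim(R_gR_h)=1$ whenever $0\neq g,h,g+h\in\spo(R)$, which lets one splice together walks of length $3$ into walks of length $4$. Your translate-into-$P$ reformulation is cleaner: it handles all lengths uniformly, does not actually need the degree bound from Theorem~\ref{monomialdegree}, and reduces the whole verification to the two non-memberships $a-b\notin P-P$ and $2a\notin P-P$. Both characterizations you use are correct: a multilinear monomial is a non-identity iff its sequence of partial sums translates into $P$, and it lies in $I_0$ iff some contiguous submonomial has degree outside $\spo(R)$, i.e.\ iff some difference $\sigma_t-\sigma_s$ falls outside $P-P$; so ``non-trivial'' is exactly $\Sigma-\Sigma\subseteq P-P$.

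There is one computational slip in your ``only if'' paragraph. When $a=2b$ the tuple is $(0,2b,3b,5b)$, so $R_b=\langle e_{23}\rangle$ and $R_{2b}=\langle e_{12},e_{34}\rangle$; then $e_{23}e_{34}=e_{24}\neq 0$, so $R_bR_{2b}\neq 0$ and your proposed identity fails. The correct witness (and the one the paper uses) is $x_1x_2$ with $\deg x_1=\deg x_2=b$: here $R_bR_b=0$ while $2b=a\in\spo(R)$, so this is a non-trivial length-$2$ identity. Your argument for the case $2a=b$ is fine as written. With this fix the proposal is complete.
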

	    
\begin{proof}
    We assume that $T_G(R)$ is generated by the identities $(\ref{(3)})-(\ref{(5)})$.  Proposition \ref{coars} and Remark \ref{coarse2} imply that there exist $a,b>0$ in $G$ such that $R$ is isomorphic to the elementary grading induced by $(0, a, a+b, 2a+b)$. If $a=b$, then by Theorem \ref{almnond} we have $T_G(R)$ is generated by $(\ref{(3)})-(\ref{(5)})$. Now assume that $a\neq b$. If $a=2b$ or $2a=b$, then $R_{2b}\neq 0$ and $R_{b}=\langle e_{23}\rangle$, hence $x_{1}x_{2}$, with $\deg x_1 =\deg x_2=b$, is a non-trivial monomial identity, which is a contradiction. Therefore we get $a\neq 2b$ and $2a\neq b$.
    
    Now assume $R$ is endowed with the elementary grading induced by $(0,a,a+b,2a+b)$ with $a$, $b>0$, $a\neq b$, $a\neq 2b$ and $2a\neq b$. In this case the non-zero homogeneous components of $R$ are 
    \begin{align*}
    &R_0=\langle e_{11}, e_{22}, e_{33}, e_{44} \rangle, R_a=\langle e_{12},e_{34}\rangle, R_b=\langle e_{23}\rangle, \\&R_{a+b}=\langle e_{13}, e_{24} \rangle, R_{2a+b}=\langle e_{14} \rangle,R_{-a}=\langle e_{21}, e_{43}\rangle,\\ &R_{-b}=\langle e_{32}\rangle, R_{-a-b}=\langle e_{31}, e_{42} \rangle, R_{-2a-b}=\langle e_{41} \rangle.
    \end{align*}
    By Theorem \ref{monomialdegree}, we may consider only monomials of length up to $4$. Let us assume first that there exists a monomial $M=x_{1}x_{2}x_{3}x_{4}$ of length $4$, with $\deg x_i=g_i\in G$, that is not a consequence of monomials of length less than 4. Note that if $R_{g_i}R_{g_{i+1}}= R_{g_i+g_{i+1}}$ for some $i\in\{1,2,3\}$, then the monomial $N$ of degree $3$ obtained by replacing $x_{i}x_{i+1}$ in $M$ by a variable of degree $g_i+g_{i+1}$ is an identity for $R$. It is clear that $M$ is a consequence of $N$, which is a contradiction.
	If $M$ has a submonomial of degree 0, the same argument in the proof of Proposition \ref{monomialdegree} implies that $M$ is a consequence of a monomial identity of length $3$, and we may assume that $M$ does not have a submonomial of degree zero. In particular, each variable has non-zero degree.
	Direct verification shows that $\dim (R_gR_h)=1$ whenever $0\neq g,h \in \spo(R)$ and $0\neq g+h\in \spo(R)$. Hence we get $\dim R_{g_i}R_{g_{i+1}}\leq 1$ for $i=1,2,3$. If $R_{g_i}R_{g_{i+1}}=0$ for some $i\in\{1,2,3\}$, then $M$ is a consequence of a monomial identity of degree 2, which is a contradiction. Therefore we have $\dim R_{g_i}R_{g_{i+1}}= 1$ for $i=1,2,3$. The monomials $x_{1}x_{2}x_{3}$, $x_{2}x_{3}x_{4}$ are not identities for $R$ and, as a consequence, 
	$\dim R_{g_1}R_{g_2}R_{g_3}=
	\dim R_{g_2}R_{g_3}R_{g_4}=1$. This implies that there exist uniquely determined tuples $(i_1,i_2,i_3, i_4)$ and $(i_2^{\prime}, i_3^{\prime}, i_4^{\prime},i_5)$ such that $e_{i_1i_2}, e_{i_2i_3}, e_{i_3i_4}$ have degree $g_1,g_2,g_3$, respectively and  $e_{i_2^{\prime}i_3^{\prime}}, e_{i_3^{\prime}i_4^{\prime}}, e_{i_4^{\prime}i_5}$ have degree $g_2,g_3,g_4$, respectively. Note that $e_{i_2i_4}=e_{i_2i_3}e_{i_3i_4}$ and $e_{i_2^{\prime}i_4^{\prime}}=e_{i_2^{\prime}i_3^{\prime}}e_{i_3^{\prime}i_4^{\prime}}$ lie in $R_{g_2}R_{g_3}$. Since $\dim(R_{g_2}R_{g_3})=1$ we obtain $i_2=i_2^{\prime}$, $i_3=i_3^{\prime}$ and $i_4=i_4^{\prime}$. It means the matrices $e_{i_1i_2},  e_{i_2i_3}, e_{i_3i_4}, e_{i_4i_5}$ have degree $g_1,g_2,g_3,g_4$, respectively. Then $e_{i_1i_2}  e_{i_2i_3} e_{i_3i_4} e_{i_4i_5}=e_{i_1 i_5}\neq 0$, which is a contradiction, since $M$ is a graded identity for $R$. Hence, every non-trivial identity of length 4 is a consequence of identities of shorter length.
	
	We suppose now $m=x_1x_2x_3$ is a monomial identity of degree $3$, with $\deg x_i=g_i$, for $i=1,2,3$. First we observe that if $\dim R_{g_2}=1$, then $m$ is a consequence of one of the monomial identities $x_{g_1}x_{g_2}$ or $x_{g_2}x_{g_3}$, or $\dim R_{g_1}R_{g_2}=\dim R_{g_2}R_{g_3}=1$ and, in this case, $m$ is not an identity. Hence, it remains to consider the case $\dim R_{g_2}=2$. We must have $\dim R_{g_3}=2$ or $\dim R_{g_1}=2$, otherwise, by direct verification, $m$ is a consequence of a monomial of length 2. In the cases above, we may verify that $R_{g_2}R_{g_3}=R_{g_2+g_3}$ or $R_{g_1}R_{g_2}=R_{g_1+g_2}$. In any case, $m$ is a consequence of a monomial identity of length 2.

	Finally we consider the case of monomial identities of degree 2. In this case, we just need to observe that they are consequences of  $x_g=0$, with $g\not\in Supp(\Gamma)$. One can check that, since $a, b>0$, this happens if and only if $a\neq b$, $a\neq 2b$ and $b\neq 2a$.
	\end{proof}
		
		Now we are going to show an analogous result about $M_5(F)$.
		
\begin{theorem}\label{n=5}
   Let $G$ be a linearly ordered abelian group. Let $R$ denote the algebra $M_5(F)$ with a $G$-grading such that the neutral component is commutative. The $T_G$-ideal $T_G(R)$ is generated, as a $T_G$-ideal, by the identities $(\ref{(3)})-(\ref{(5)})$ if and only if there exist $a,b>0$ in $G$, with $a\neq 2b$, $b\neq 2a$, $a\neq 3b$ and $a\neq 4b$, such that $R$ is isomorphic to the elementary grading on $M_5(F)$ induced by $\overline{g}=(0,a,a+b,a+2b,2a+2b)$.
\end{theorem}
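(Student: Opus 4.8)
The statement, like Theorem \ref{n=4}, is an equivalence both sides of which reduce, through the Bahturin--Drensky basis and Remark \ref{multrem}, to deciding when $\Gamma$ is almost non-degenerate: indeed $T_G(R)=\langle(\ref{(3)})$--$(\ref{(5)})\rangle$ if and only if every graded monomial identity of $R$ is trivial. So the plan is to show that $\Gamma$ is almost non-degenerate exactly for the displayed tuples. First I would pin down the tuple. By Remark \ref{grad} the grading is elementary, and since its neutral component is commutative it is induced by pairwise distinct elements; by Remark \ref{isom} I may normalize $0=g_1<g_2<g_3<g_4<g_5$. The corner $R_{g_5-g_1}=\langle e_{15}\rangle$ is one-dimensional, so as soon as $\Gamma$ has no nontrivial monomial identity of length $2$ I may invoke Proposition \ref{coars} and its specialization Remark \ref{coarse2} with $n=5$, $\lfloor n/2\rfloor=2$. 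These force the increments $d_t=g_{t+1}-g_t$ to satisfy $d_t=d_{5-t}$, that is $d_1=d_4=:a$ and $d_2=d_3=:b$ with $a,b>0$, whence $\overline{g}=(0,a,a+b,a+2b,2a+2b)$.

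For necessity, assume $T_G(R)=\langle(\ref{(3)})$--$(\ref{(5)})\rangle$, so $\Gamma$ is almost non-degenerate and the above yields the tuple. If $a=b$ the four conditions hold trivially, so suppose $a\neq b$; then the six positive degrees are distinct and in particular $R_b=\langle e_{23},e_{34}\rangle$ and $R_{2b}=\langle e_{24}\rangle$, whence $R_b^{3}=0$ and $R_{2b}^{2}=0$. These are nontrivial monomial identities exactly when their degrees $3b$ and $4b$ belong to $\spo(R)$, and inspection of the list $\{a,b,a+b,2b,a+2b,2a+2b\}$ of positive degrees shows that one of $3b,4b$ lies in the support precisely when one of $a=2b$, $b=2a$, $a=3b$, $a=4b$ holds. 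Hence all four inequalities are forced.

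For sufficiency, assume the tuple together with the four conditions. If $a=b$ the grading is the canonical $\mathbb{Z}$-grading up to rescaling, hence almost non-degenerate by Proposition \ref{almnd} (equivalently Theorem \ref{almnond}, as then $|\spo(\Gamma)|=2n-1$). If $a\neq b$, the conditions make $a,b,a+b,2b,a+2b,2a+2b$ pairwise distinct, giving $R_a=\langle e_{12},e_{45}\rangle$, $R_b=\langle e_{23},e_{34}\rangle$, $R_{a+b}=\langle e_{13},e_{35}\rangle$, $R_{2b}=\langle e_{24}\rangle$, $R_{a+2b}=\langle e_{14},e_{25}\rangle$, $R_{2a+2b}=\langle e_{15}\rangle$ and the transposed negative components; moreover $2a,3b,4b\notin\spo(R)$, so the always-vanishing products $R_a^2,R_b^3,R_{2b}^2$ have unsupported degree and give only trivial identities. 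By Theorem \ref{monomialdegree} it remains to rule out nontrivial monomial identities of length at most $5$. I would follow the bookkeeping of Theorem \ref{n=4}: reduce away any degree-$1_G$ subword via Lemma \ref{deg=e}; if an adjacent product $R_{g_i}R_{g_{i+1}}$ is full, pass to a shorter identity; and otherwise use the bound $\dim R_gR_h\leq 1$ for $0\neq g,h$ with $0\neq g+h\in\spo(R)$ to force the factors to trace a unique path $i_1\to\cdots\to i_{k+1}$, so that $e_{i_1i_{k+1}}\neq 0$ contradicts the identity. The length-$2$ case is closed by a direct check that no pair of nonzero support degrees with nonzero supported sum has vanishing product.

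The main obstacle is the length-$4$ and length-$5$ analysis in the sufficiency direction, which is genuinely heavier than in Theorem \ref{n=4}: one must verify the one-dimensionality of all relevant two- and three-fold products and the uniqueness of the resulting path across the whole multiplication table, and the symmetric increment pattern $(a,b,b,a)$ together with borderline coincidences (such as $a+b$ versus $2b$, or $2b$ versus $a$) creates several configurations that can only be separated by invoking precisely the excluded relations $a\neq 2b$, $b\neq 2a$, $a\neq 3b$, $a\neq 4b$.
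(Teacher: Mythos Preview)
Your proposal is correct and follows essentially the same route as the paper: reduce to almost non-degeneracy, use Proposition \ref{coars}/Remark \ref{coarse2} to pin down the symmetric increment pattern $(a,b,b,a)$, and for sufficiency invoke Theorem \ref{monomialdegree} together with the one-dimensionality bound $\dim(R_gR_h)\leq 1$ exactly as in Theorem \ref{n=4}. The only noticeable difference is in the necessity direction: the paper exhibits four separate length-$2$ witnesses (e.g.\ $x_1x_2$ with $\deg x_1=\deg x_2=a$ when $a=2b$ or $b=2a$; $\deg x_1=a$, $\deg x_2=-b$ when $a=3b$; $\deg x_1=\deg x_2=2b$ when $a=4b$), whereas you use the two fixed identities $R_b^{3}=0$ and $R_{2b}^{2}=0$ and then check when $3b$ or $4b$ lands in the support. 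Your packaging is a bit tidier, but both arguments are equally elementary and rest on the same structural facts.
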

		
\begin{proof}
   We assume that $T_G(R)$ is generated by the identities $(\ref{(3)})-(\ref{(5)})$.  Proposition \ref{coars} and Remark \ref{coarse2} imply that there exist $a,b>0$ in $G$ such that $R$ is isomorphic to the elementary grading induced by $(0,a,a+b,a+2b,2a+2b)$. Now we prove that $a\neq 2b$, $b\neq 2a$, $a\neq 3b$ and $a\neq 4b$. If $a=b$ then these inequalities are clearly true, now assume that $a\neq b$.
   If $a=2b$, then $2a=a+2b\in \spo(R)$ and $x_{1}x_{2}$, with $\deg x_1=\deg x_2=a$, is a non-trivial graded monomial identity.
   If $2a=b$, then $2a=b\in \spo(R)$ and $x_{1}x_{2}$, with $\deg x_1=\deg x_2=a$, is a non-trivial graded monomial identity.
   If $a=3b$, then $a-b=2b\in \spo(R)$ and $x_1x_2=0$, with $\deg(x_1)=a$ and $\deg(x_2)=-b$, is a non-trivial graded monomial identity again.
   Finally, if $a=4b$, then $4b\in \spo(R)$ and $x_1x_2=0$, with $\deg(x_1)=2b$ and $\deg(x_2)=2b$, is a non-trivial graded monomial identity.
   Since by assumption, $R$ is generated by $(\ref{(3)})-(\ref{(5)})$, it does not satisfy non-trivial graded monomial identities and this implies that  $a\neq 2b$, $b\neq 2a$, $a\neq 3b$ and $a\neq 4b$.
    
    Now assume the elementary grading on $R$ is induced by $(0,a,a+b,a+2b,2a+2b)$ with $a, b>0$ in $G$ and $a\neq 2b$. If $a=b$, then Theorem \ref{almnond} implies that $T_G(R)$ is generated by $(\ref{(3)})-(\ref{(5)})$. Now assume $a\neq b$. In this case the non-zero homogeneous components of $R$ are 
    \begin{align*}
    &R_0=\langle e_{11}, e_{22}, e_{33}, e_{44}, e_{55} \rangle, R_{a}=\langle e_{12}, e_{45} \rangle, R_{a+b}=\langle e_{13}, e_{35} \rangle,\\& R_{a+2b}=\langle e_{14}, e_{25} \rangle, R_{2a+2b}=\langle e_{15} \rangle, R_{b}=\langle e_{23}, e_{34}\rangle, R_{2b}=\langle e_{24} \rangle\\& R_{-a}=\langle e_{21}, e_{54} \rangle, R_{-a-b}=\langle e_{31}, e_{53} \rangle, R_{-a-2b}=\langle e_{41}, e_{52} \rangle,\\& R_{-2a-2b}=\langle e_{51}  \rangle, R_{-b}=\langle e_{32}, e_{43}\rangle, R_{-2b}=\langle e_{42}\rangle
    \end{align*}
   Direct calculations show that $\dim (R_gR_h)=1$ whenever $0\neq g,h \in \spo(R)$ and $0\neq g+h\in \spo(R)$. 
   Again, by Theorem \ref{monomialdegree}, we may consider only monomials of length up to 5. Following the same arguments in the proof of Theorem \ref{n=4}, we obtain any monomial identity of length 3, 4, or 5 is a consequence of a monomial identity of shorter length. 
   
	Finally, we consider the case of monomial identities of degree 2. By direct computations we obtain that, since $a, b>0$, the graded monomial identities of length 2 are consequences of $x_g=0$, with $g\not\in Supp(\Gamma)$ if $a\neq 2b$ and $b\neq 2a$, $a\neq 3b$ and $a\neq 4b$ and we are done.
	\end{proof}

	The problem of the existence of non-trivial monomial identities has the following combinatorial interpretation. Let $(g_1,\dots, g_n)$ be a tuple of integers and consider the set \[S:=\{g_i-g_j, 1\leq i,j\leq n\}.\] For any $k\in S$ let $X_k$ be the matrix with $1$ in the positions $(i,j)$ such that $g_j-g_i=k$ and $0$ elsewhere. We consider the complete directed graph $G$ with vertices $\{v_1,\dots, v_n\}$  (i.e. a graph with arrows $v_i\rightarrow v_j$ for every $1\leq i,j \leq n$) and arrows $v_i\rightarrow v_j$ labeled by $g_i-g_j$. Then for $k_1,\dots, k_t\in S$, the $(i,j)$-th entry of $X_{k_1}\cdots X_{k_t}$ is the number of paths from $v_i$ to $v_j$ going through $t$ vertices (not necessarily distinct) such that the sequence of numbers in the arrows of the path is $(k_1,\dots, k_t)$. We now consider the following "elementary" related problem. We say $(g_1,\dots, g_n)$ is a \textit{good sequence} of integers if $X_{k_1}\cdots X_{k_t}=0$ implies that there exist $1\leq i\leq j \leq t$ such that $k_i+\cdots +k_j\notin S$ or one of the products $X_{k_1}\cdots X_{k_{t-1}}$, $X_{k_2}\cdots X_{k_t}$ is zero. 
	\begin{problem}
		Determine all good sequences of integers.
	\end{problem}

\end{document}